\theoremstyle{plain}
\newtheorem{theorem}{Theorem}[section]
\newtheorem*{theorem*}{Theorem}
\newtheorem*{maintheorem-intro}{Theorem}
\newtheorem*{maintheorem-intro-2}{Theorem~\ref{Bridge number and genus}}
\newtheorem*{theorem-cablingconj}{Theorem~\ref{apps1} (1)}
\newtheorem*{theorem-toroidal}{Specialization of Theorem~\ref{apps1} (2)}
\newtheorem*{theorem-lens}{Theorem~\ref{Bounding distance - special}(1)}
\newtheorem*{theorem-SFS}{Theorem~\ref{Bounding distance - special}(2)}
\newtheorem*{theorem-cosmetic}{Theorem}
\newtheorem*{theorem-bridge}{Specialization of Corollary~\ref{Cor: exceptional bridge}}
\newtheorem*{theorem-Heeggenus}{Corollary~\ref{Cor: exceptional bridge} (2)}
\newtheorem{proposition}[theorem]{Proposition}
\newtheorem{corollary}[theorem]{Corollary}
\newtheorem{lemma}[theorem]{Lemma}
\theoremstyle{definition}
\newtheorem{remark}[theorem]{Remark}
\newtheorem{definition}[theorem]{Definition}
\newtheorem{example}[theorem]{Example}
\theoremstyle{definition}
\newcommand{\bi}{\begin{itemize}}
\newcommand{\ei}{\end{itemize}}
\newcommand{\be}{\begin{enumerate}}
\newcommand{\ee}{\end{enumerate}}
\begin{document}
   \title[The Shadow Quandle Cocycle Invariant of Knotoids]{The Shadow Quandle Cocycle Invariant of Knotoids}
   \author{Nicholas Cazet}
   \subjclass[2020]{57K12, 57K18}
   \keywords{Knotoids, quandles, quandle cohomology, shadow coloring number}
   \begin{abstract}

   This paper studies the chirality of knotoids using shadow quandle colorings and the shadow quandle cocycle invariant. The shadow coloring number and the shadow quandle cocycle invariant is shown to distinguish infinitely many knotoids from their mirrors. Specifically, the knot-type knotoid $3_1$ is shown to be chiral.  The weight of a quandle 3-cocycle is used to calculate the crossing numbers of infinitely many multi-linkoids.

 \end{abstract}

\maketitle

\section{Introduction}

Since their introduction in 2012 by Turaev \cite{turaev2012knotoids}, knotoids and their invariants have generated much interest, for example see \cites{kodokostas2019rail,gugumcu2017new,manouras2021finite,goundaroulis2019systematic}. G\"ug\"umc\"u, Kauffman, and Lambropoulou give an excellent account of  knotoid research from its inception until 2019 in \cite{gugumcu2017knotoids}. Included in their exposition is the interest in knotoids among mathematical biologists for modeling open protein chains, for example see \cites{barbensi2021f,dorier2018knoto,goundaroulis2017studies,polym}.

Quandles were independently introduced by Joyce and Matveev in the 1980's \cites{joyce,Matveev_1984}. They showed that the fundamental knot quandle is an algebraic invariant that distinguishes knots up to mirror reflection. The texts \cites{kamada2017surface,CKS,elhamdadi2015quandles,nosaka2017quandles } give a good account of definitions and applications of quandles in (surface-) knot theory.  The first formal study of quandles in the context of knotoids was done by  G\"ug\"umc\"u and Nelson in \cite{gugumcu2019biquandle}. They showed that the biquandle counting invariant can detect mirror images of knotoids. Later, G\"ug\"umc\"u, Nelson, and Oyamguchi introduced the biquandle bracket of knotoids and showed that this invariant is stronger than the counting invariant \cite{gugumcu2021biquandle}.

 Shadow quandle coloring and the shadow quandle cocycle invariant were defined in \cite{shadowquandle}. In fact, a version of shadow quandle coloring of knotoid diagrams was defined, but the diagrams were called 1-knot arcs and the color of an endpoint arc equaled the color of its adjacent region. This was defined in the context of cross-sections of surface-links. This paper generalizes and reformulates the original definitions of shadow quandle coloring and the shadow quandle cocycle invariant of 1-knot arcs in the context of knotoids.

 The shadow cocycle invariant is used to distinguishes the knot-type knotoid $3_1$ from its mirror in Theorem \ref{thm:trefoil}.  Moreover, the shadow coloring number and shadow cocycle invariant are used to distinguish infinitely many knotoids from their mirrors in Theorems \ref{thm:colornumber} and \ref{thm:cocycle}.

Section \ref{sec2} introduces knotoids and multi-linkoids, Section \ref{sec3} defines the shadow coloring number of knotoids and uses it to distinguish infinitely many knotoids from their mirrors, Section \ref{sec4} defines the shadow quandle cocycle invariant for knotoids, and Section \ref{sec5} proves several chirality and crossing number results of knotoids and multi-linkoids using the invariant.

\section{Knotoids}
\label{sec2}

Knotoids are equivalence classes of knotoid diagrams.

\begin{definition} A {\it knotoid diagram} $\kappa$ in an orientable surface $\Sigma$ is a generic immersion of the unit interval with crossing information given at each double point. Additionally, the image of 0 and 1 are distinct and called the {\it head} and {\it leg} of $\kappa$. A {\it trivial}  knotoid diagram is an embedding of the unit interval. 

\end{definition}

The three Reidemeister moves are defined on knotoid diagrams away from the head and leg, the supporting disk of the move is disjoint from either endpoint.

\begin{definition}

Consider the equivalence relation on knotoid diagrams generated by Reidemeister moves and planar isotopy.  The equivalence classes of this relation are called {\it knotoids}. Knotoids in $\Sigma=S^2$ and $\Sigma=\mathbb{R}^2$ are referred to as {\it classical}.

\end{definition}

\begin{definition}

A classical knotoid is called a {\it knot-type knotoid} if it admits a diagram with both endpoints in the outer region. 

\end{definition}

In $S^2$, a knotoid is a knot-type knotoid if it admits a diagram with both endpoints in the same region. Each knotoid has an associated knot by connecting the endpoints of any representative diagram by any generic simple arc that is given under crossing information wherever it meets the knotoid diagram. When a classical knotoid is knot-type then its invariants are closely related to the invariants of its associated knot.

\begin{definition}[Turaev '12 \cite{turaev2012knotoids}]

Each knotoid diagram has a closed 2-disk neighborhood $B\subset\Sigma$ that meets the diagram along a radius of $B$. This neighborhood is called a {\it regular neighborhood} of the endpoints. Given knotoid diagrams $D$ and $D'$ representing the knotoids $\kappa$ and $\kappa'$, pick regular neighborhoods $B$ and $B'$ of the leg of $\kappa$ and the head of $\kappa'.$ Glue $\Sigma-\text{Int}(B)$ to $\Sigma^2-\text{Int}(B')$ along a homeomorphism $\partial B \to \partial B'$ carrying the only point of $D\cap\partial B_1$ to the only point of $D'\cap\partial B_2$. This creates the {\it product knotoid diagram} $DD'$ in $\Sigma\#\Sigma$. Moreover, this product is invariant of diagram, i.e. $\kappa\kappa':=DD'$ is a well-defined {\it knotoid product} for any diagrams $D$ and $D'$ of $\kappa$ and $\kappa'$.
\end{definition}

Turaev also described that when the knotoids $\kappa\subset S^2$ and $\kappa'\subset S^2$ have diagrams $D$ and $D'$ such that the head of $D'$ is adjacent to the outer region, a diagram of $\kappa\kappa'$ is given by concatenating $D$ and $D'$ at the leg of $D$ and head of $D'$ in a neighborhood of the leg of $D$.  This concatenation gives a knotoid diagram $DD'$ representing $\kappa\kappa'$.

\begin{definition}
A knotoid is {\it prime} if it cannot be written as a product of two non-trivial knotoids.

\end{definition}

Goundaroulis, Dorier, and Stasiak systematically classified prime classical knotoids up to five crossings \cite{goundaroulis2019systematic}. This classification is up to orientation and the symmetry-related involutions, including mirror reflection. They introduced a two-number notation of knotoids, analogous to the Rolfsen notation of  knots and links. The first number represents the crossing number of the knotoid  and the second number represents an index to distinguish  knotoids in the same surface with the same crossing number. A knot-type knotoid is given the Rolfsen notation of its associated knot.

\begin{definition}
A {\it multi-linkoid diagram} in a closed surface $\Sigma$ is a generic immersion of a number of unit intervals $[0,1]$ and circles $S^1$ into $\Sigma$. The images of the points 0 and 1 of the unit intervals are pairwise disjoint. Crossing information is given at every transverse double point. 
\end{definition}

\begin{definition}

Consider the equivalence relation on multi-linkoids diagrams generated by Reidemeister moves away from the endpoints and planar isotopy.  The equivalence classes of this relation are called {\it multi-linkoids}.

\end{definition}

\section{Shadow Quandle Colorings of Knots and Knotoids}
\label{sec3}

\begin{definition}
A {\it quandle}  is a set $X$ with a binary operation $(x,y)\mapsto x^y$ such that 

\begin{enumerate}

\item[(i)] for any $x\in X$, it holds that $x^x=x$,
\item[(ii)] for any $x,y\in X$, there exists a unique $z\in X$ such that $z^y=x$, and 
\item[(iii)] for any $x,y,z\in X$, it holds that $(x^y)^z=(x^z)^{(y^z)}$.

\end{enumerate}

\label{def:quandle}
\end{definition}

\begin{example}

The {\it trivial quandle} $E_n$ of order $n$ is determined by the relation $x^y=x$ for all $x,y\in E_n$.

\end{example}

\begin{example}

The {\it dihedral quandle}  $R_n=\mathbb{Z}/n\mathbb{Z}$ of order $n$ is defined by $x^y=2y-x$. The quandle operation can be calculated through the symmetries of regular $n$-gons. Let $R_n$ be the vertices of the regular $n$-gon cyclicly ordered clockwise or counter-clockwise. For two points $x,y\in R_n$, the product $x^y$ is the reflection of $x$ across the line containing $y$ and the center of the $n$-gon. See Figure \ref{fig:dihedral}.

\end{example}

\begin{figure}

\begin{overpic}[unit=1mm,scale=.27]{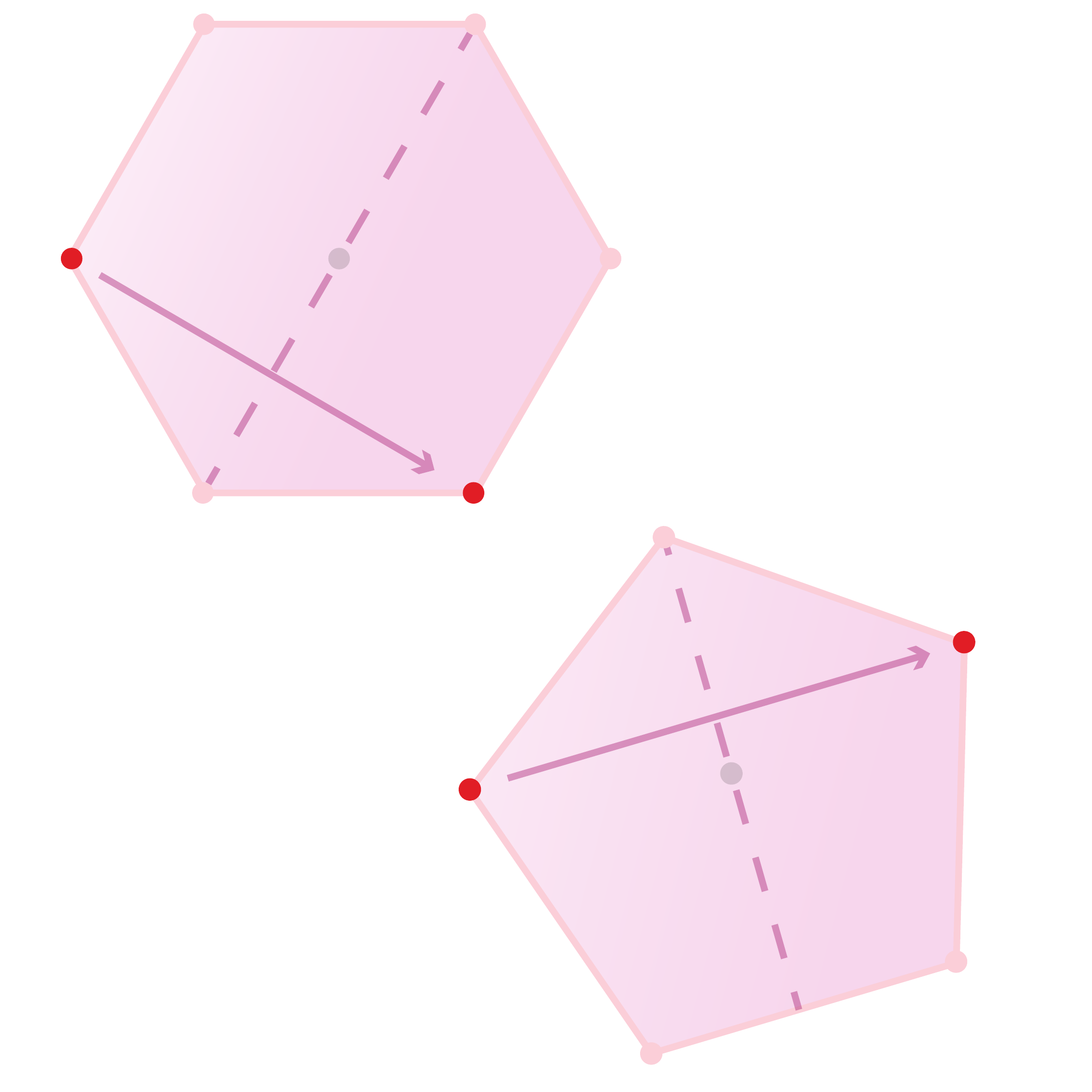}\put(10,62){0}\put(26.5,62){1} \put(36,46.1){2} \put(.5,46.1){5} \put(10,30){4} \put(26.5,30){3}  \put(44,46.1){$5*4=3$} 
\put(10,52){$n=6$}
\put(36.5,33){0} \put(56,26){1} \put(56,5.2){2} \put(35,-2){3} \put(23,16){4} \put(2,16){$4*0=1$}
\put(33,9){$n=5$}
\end{overpic}

\caption{The dihedral quandle operation determined by the symmetries of a regular $n$-gon.}
\label{fig:dihedral}
\end{figure}

\begin{example}

There are only three quandles of order 3: $E_3$, $R_3$, and $P_3$. The multiplication table of $P_3$ is given in Table $\ref{tab:2}$.
\begin{table}[ht]
\centering
\begin{tabular}{c|c c c  }

$P_3$ & 0&1&2 \\\hline 
 0 & 0 &0&0\\ 
1 & 2&1&1\\
2 & 1 &2&2\\

\end{tabular}
\vspace{4mm}
\caption{Multiplication table of $P_3$.}
\label{tab:2}
\end{table}

\end{example}
\
\begin{definition} For a quandle $X$, a {\it quandle coloring} or {\it $X$-coloring} of an oriented knot or knotoid diagram is an assignment of $X$ elements called {\it colors} to each arc such that the assignment satisfies the relation of Figure \ref{fig:colorlink} at each crossing. \end{definition}

The co-orientation normal is a more convenient orientation marking for verifying this relation. Given an oriented arc, the conventional normal vector direction is shown in Figure \ref{fig:colorlink}.

\begin{figure}[ht]

\begin{overpic}[unit=.434mm,scale=.8]{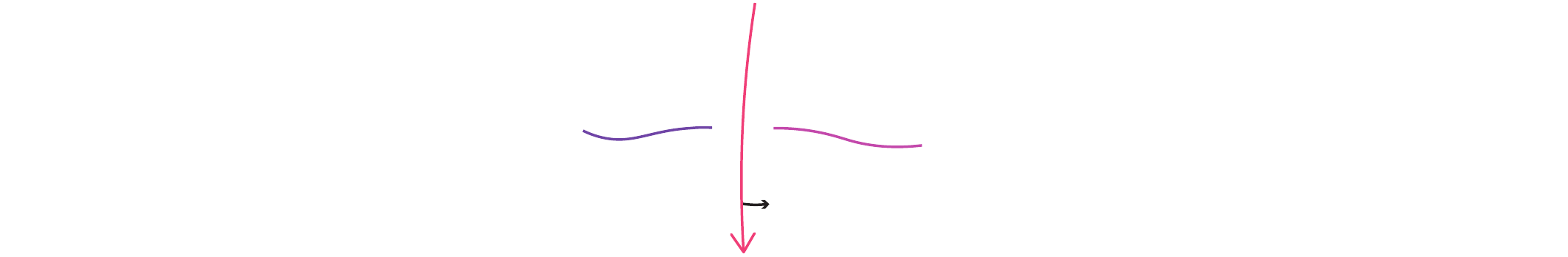}\put(163,35){$x$}
\put(182,47){$y$}\put(212,34){$x^y$}
\end{overpic}

\caption{Quandle coloring condition around a crossing.}\label{fig:colorlink}
\end{figure}

\begin{definition}
For a quandle $X$, an {\it X-set} is a set $Y$ with a map $*:Y\times X\to Y; (y,x)\mapsto y*x$ satisfying the following conditions:

\begin{enumerate}
\item[(i)] For any $x\in X$, $Y\to Y; y\mapsto y*x$ is a bijection.

\item[(ii)] For any $y\in Y$ and any $a,b\in X, (y*a)*b=(y*b)*(a*b)$.

\end{enumerate}

\end{definition}

Let $D$ be a diagram of an oriented knot. Since a knot diagram is a 4-valent planar graph, the complement $\mathbb{R}^2\setminus D $ (or $S^2\setminus D$) is a collection of connected components called {\it regions}. The set of regions of $D$ is denoted Region($D$). With the addition of crossing information, the set of $D$'s arcs is denoted Arc($D$).

\begin{definition} Let $X$ be a quandle and $Y$ and $X$-set. For any oriented knot diagram $D$, an {\it (X,Y)-coloring} of $D$ is a map $c: \text{Arc($D$)}\cup \text{Region($D$)}\to X\cup Y$ satisfying the following conditions:

\begin{enumerate}
\item[(i)] $c(\text{Arc($D$)})\subset X$ and $c(\text{Region($D$)})\subset Y$.

\item[(ii)] The restriction of $c\big|_\text{Arc($D$)}:\text{Arc($D$)}\to X$ is an $X$-coloring of $D$.

\item[(iii)] Let $r_1$ and $r_2$ be adjacent regions of $D$ along an arc $a\in\text{Arc($D$)}$. If the co-orientation normal of $a$ points from $r_1$ to $r_2$, then $c(r_1)^{c(a)}=c(r_2)$. See Figure \ref{fig:colorregion}.

\end{enumerate}

\end{definition}

\begin{figure}[ht]

\begin{overpic}[unit=.434mm,scale=.8]{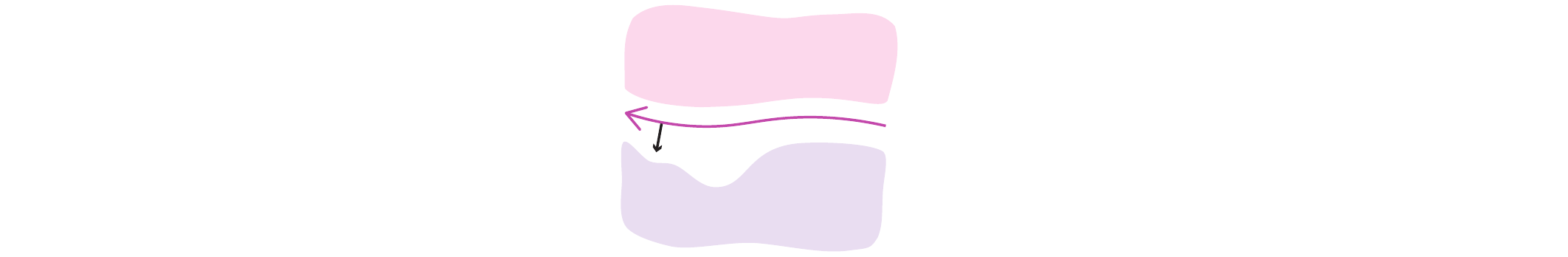}\put(198,13){$c(r_2)$}\put(174,24){$c(a)$} \put(186,48){$c(r_1)$}
\end{overpic}

\caption{Coloring condition of region separated by an arc, $c(r_1)^{c(a)}=c(r_2)$.}

\label{fig:colorregion}
\end{figure}

For a knotoid diagram $D\subset\Sigma$, $\Sigma\setminus D$ is a collection of connected components called {\it regions}, Region($D$). With crossing information, $D$ is a collection of arcs Arc($D$); two arcs are distinct since they contain the head and leg.

\begin{definition}
Let $X$ be a quandle and $Y$ an $X$-set. For any oriented knotoid diagram $D$, an {\it (X,Y)-coloring} of $D$ is a map $c: \text{Arc($D$)}\cup \text{Region($D$)}\to X\cup Y$ satisfying the following conditions:

\begin{enumerate}
\item[(i)] $c(\text{Arc($D$)})\subset X$ and $c(\text{Region($D$)})\subset Y$.

\item[(ii)] The restriction of $c\big|_\text{Arc($D$)}:\text{Arc($D$)}\to X$ is an $X$-coloring of $D$.

\item[(iii)] Let $r_1$ and $r_2$ be adjacent regions of $D$ along an arc $a\in\text{Arc($D$)}$ that does not meet the head nor leg. If the co-orientation normal of $a$ points from $r_1$ to $r_2$, then $c(r_1)^{c(a)}=c(r_2)$. See Figure \ref{fig:colorregion}.

\item[(iv)] Let $r$ be the region of $D$ adjacent to an arc $a\in\text{Arc($D$)}$ that meets the head or leg.  Regardless of orientation, $c(r)^{c(a)}=c(r)$. See Figure \ref{fig:colorregion2}.

\end{enumerate}

\end{definition}

\begin{figure}[ht]

\begin{overpic}[unit=.434mm,scale=.8]{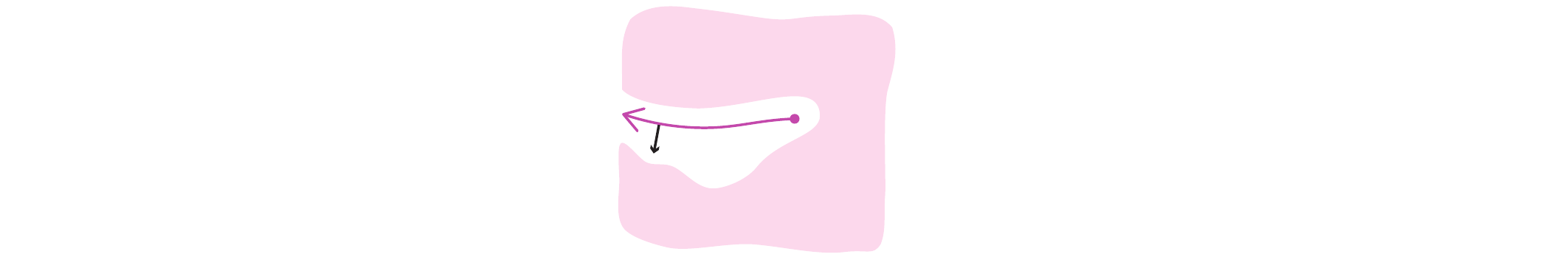}\put(174,24){$c(a)$} \put(186,48){$c(r)$}
\end{overpic}

\caption{Coloring condition around an endpoint arc, $c(r)^{c(a)}=c(r)$.}

\label{fig:colorregion2}
\end{figure}

 An $(X,Y)$-coloring is also called a {\it shadow quandle coloring}. 
 
 \begin{definition} The set of $(X,Y)$-colorings of a knot or knotoid diagram $D$ is denoted {\it $Col_{(X,Y)}(D)$.} The cardinal number of $Col_{(X,Y)}(D)$ is denoted by $col_{(X,Y)}(D)$ and called the {\it (X,Y)-coloring number} or {\it shadow coloring number} by $(X,Y)$. 
\end{definition}

\begin{proposition}[Proposition 9.4.3 of \cite{kamada2017surface}]

If two diagrams $D$ and $D'$ present equivalent oriented knots, then there is a bijection between $Col_{(X,Y)}(D)$ and $Col_{(X,Y)}(D')$. The shadow coloring number $col_{(X,Y)}(D)$ is an invariant of an oriented knot.
\label{prop:knot}
\end{proposition}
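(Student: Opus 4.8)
The plan is to show that each Reidemeister move, together with planar isotopy, induces a bijection $Col_{(X,Y)}(D)\to Col_{(X,Y)}(D')$. Since equivalent oriented knots are connected by a finite sequence of such moves, composing these bijections yields a bijection $Col_{(X,Y)}(D)\to Col_{(X,Y)}(D')$ for any two diagrams, and the equality $col_{(X,Y)}(D)=col_{(X,Y)}(D')$ is then immediate; planar isotopy is trivial since it changes neither $\text{Arc}(D)$ nor $\text{Region}(D)$ combinatorially. So fix a move performed inside a disk $B$, so that $D$ and $D'$ agree on the complement of $B$, and let $c\in Col_{(X,Y)}(D)$. I would restrict $c$ to the arcs and regions meeting $\partial B$ and argue that this boundary data extends in exactly one way to a coloring of $D'$ (and conversely), so that $c\mapsto c'$ is a well-defined involution-like correspondence that is manifestly bijective.

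For the arc colors this is the classical computation underlying the ordinary quandle coloring invariant: the R1 move is handled by axiom (i), $x^x=x$; the R2 move by axiom (ii), the unique solvability of $z^y=x$, which makes $z\mapsto z^y$ invertible; and the R3 move by axiom (iii), $(x^y)^z=(x^z)^{(y^z)}$, which is exactly the identity needed for the three outgoing arc colors on $\partial B$ to match before and after the move. For the region colors, note that once the $X$-coloring of the arcs is fixed, condition (iii) of the definition of an $(X,Y)$-coloring determines the color of any region in terms of an adjacent one across a shared arc; hence the colors of the regions inside $B$ are forced by propagation from the colors on $\partial B$. The only thing to verify is that this propagation is consistent, i.e.\ that going around each small loop created inside $B$ returns the same element of $Y$. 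For R1 and R2 there is at most one bounded region introduced and consistency is either vacuous or a single application of the $X$-set axiom (i) (bijectivity of $y\mapsto y*x$). For R3 the central triangular region can be reached from $\partial B$ along different chains of arcs, and the equality of the resulting elements of $Y$ is precisely the $X$-set axiom (ii), $(y*a)*b=(y*b)*(a*b)$, applied to the colors $a,b$ of the two strands bounding that region. Thus the extension exists and is unique on both sides, giving the bijection.

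The main obstacle is the bookkeeping for the R3 move: one must label all three incident arc colors and all regions of the standard R3 picture on each side, and check simultaneously that the arc-matching on $\partial B$ (axiom (iii)) and the region-matching on $\partial B$ (the $X$-set axiom (ii)) hold, organized so that the correspondence is visibly a bijection rather than merely an equality of cardinalities. Everything else reduces to a direct, routine application of the quandle axioms and the $X$-set axioms, and in fact the whole argument is the shadow-coloring refinement of Proposition 9.4.3 of \cite{kamada2017surface}, so one could alternatively just invoke that reference after noting the region conditions behave as above.
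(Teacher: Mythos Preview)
Your proposal is correct and follows essentially the same approach as the paper: the paper simply remarks that after a Reidemeister move on an $(X,Y)$-colored diagram there is a unique coloring extending the original, which establishes the bijection; your write-up is a careful unpacking of exactly this unique-extension argument using the quandle axioms for arcs and the $X$-set axioms for regions.
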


\noindent After a Reidemeister move on a $(X,Y)$-colored knot diagram there is a unique coloring extending the original. This establishes the bijection of the proposition.

\begin{proposition}

If two orientated knotoid diagrams $D$ and $D'$ present equivalent knotoids, then there is a bijection between $Col_{(X,Y)}(D)$ and $Col_{(X,Y)}(D')$. The shadow coloring number $col_{(X,Y)}(D)$ is an invariant of a knotoid.

\end{proposition}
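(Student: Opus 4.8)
The plan is to follow the proof of Proposition~\ref{prop:knot} for knots essentially verbatim, the only genuinely new ingredient being the endpoint condition (iv). Since knotoid equivalence is generated by planar isotopy together with Reidemeister moves whose supporting disk is disjoint from the head and leg, it suffices to produce, for each such generating move carried out inside a disk $\Delta\subset\Sigma$, a bijection $Col_{(X,Y)}(D)\to Col_{(X,Y)}(D')$ between the colorings of the diagrams before and after the move. A planar isotopy changes neither the combinatorics of the arc and region sets nor the incidences encoded in conditions (i)--(iv), so it induces a canonical bijection on coloring sets and there is nothing to check.

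Next I would treat a Reidemeister move supported in a disk $\Delta$, which by hypothesis misses both endpoints; thus $D$ and $D'$ agree on $\Sigma\setminus\interior\Delta$. Given $c\in Col_{(X,Y)}(D)$, restrict it to the arcs and regions that meet $\partial\Delta$. Inside $\Delta$ the picture is that of an ordinary, endpoint-free diagram, so the computation underlying Proposition~\ref{prop:knot} applies: for each of the moves R1, R2, R3 there is exactly one assignment of $X$-colors to the arcs and $Y$-colors to the regions inside $\Delta$ extending the boundary data and satisfying conditions (ii) and (iii) locally, and likewise for $D'$. Matching these unique local extensions along $\partial\Delta$ defines the map $Col_{(X,Y)}(D)\to Col_{(X,Y)}(D')$ and its inverse; it remains only to confirm that the resulting maps really land in the coloring sets, i.e.\ that conditions (iii) and (iv) still hold along all arcs and at the endpoints lying outside $\Delta$.

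For an arc $a$ avoiding the head and leg, condition (iii) is the classical shadow region condition, and it is unaffected by a move supported in a disjoint disk: the only arcs whose $X$-color changes are those inside $\Delta$, where (iii) was verified as part of the local extension. For condition (iv): because $\Delta$ misses both endpoints, a small neighborhood of the head (and of the leg) --- the terminal segment of the endpoint arc $a$ together with the single region $r$ wrapping around that endpoint, as in Figure~\ref{fig:colorregion2} --- lies in $\Sigma\setminus\interior\Delta$ and is carried identically from $D$ to $D'$. Hence the value $c(a)$ on that terminal segment and the value $c(r)$ near the endpoint are literally unchanged by the move, and the equality $c(r)^{c(a)}=c(r)$ persists.

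The step I expect to be the main obstacle is the last bit of bookkeeping: an arbitrarily distant R1 or R2 move can subdivide the endpoint arc $a$, and it can split or merge the region $r$ surrounding an endpoint, so one must argue that the sub-arc still incident to the endpoint carries the color $c(a)$ and the sub-region still incident to the endpoint carries the color $c(r)$. This follows because outside $\Delta$ the arc- and region-colors are transported verbatim, and on the connected knotoid diagram they are propagated consistently from region to region by the quandle axioms of Definition~\ref{def:quandle} and the $X$-set axioms; but spelling this out is the one place where the knotoid structure --- the distinguished endpoint arcs and conditions (iii)--(iv) --- enters rather than the plain transcription of the knot argument of Proposition~\ref{prop:knot}. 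Once it is done, the correspondence above is the asserted bijection, and therefore $col_{(X,Y)}$ depends only on the knotoid.
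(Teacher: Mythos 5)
Your proposal is correct and follows essentially the same route as the paper: reduce to the generating moves, invoke the unique local extension of colorings across each Reidemeister move (via axiom (ii) of Definition~\ref{def:quandle}), and observe that condition (iv) is undisturbed because the supporting disk misses the head and leg. The endpoint bookkeeping you flag as the main obstacle is handled correctly (an R1 or R2 move that subdivides an endpoint arc returns the terminal segment to its original color), and the paper itself treats this point only implicitly by noting that the moves occur away from the endpoints.
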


\begin{proof}

This follows from the proof of the knot case, Proposition \ref{prop:knot}, seen in \cites{kamada2017surface,CKS,shadowquandle,nosaka2017quandles}. After each Reidemeister move, there is a unique choice of color for any newly formed arc or region. For example, Figure \ref{fig:r3} shows that after a Reidemeister III move there is a new region and arc created. Since a color $z$ assigned to the new region or arc must satisfy the relation $z^x=y$ for known colors $x$ and $y$, there exists a unique $z$ by the condition (ii) of Definition \ref{def:quandle}. Similar arguments work in the simpler cases of Reidemeister I and II moves. Since Reidemeister moves on knotoid diagram occur away from the head and leg, these unique extension arguments apply for both knots and knotoids. 

\end{proof}

\begin{figure}[ht]

\includegraphics[scale=.8]{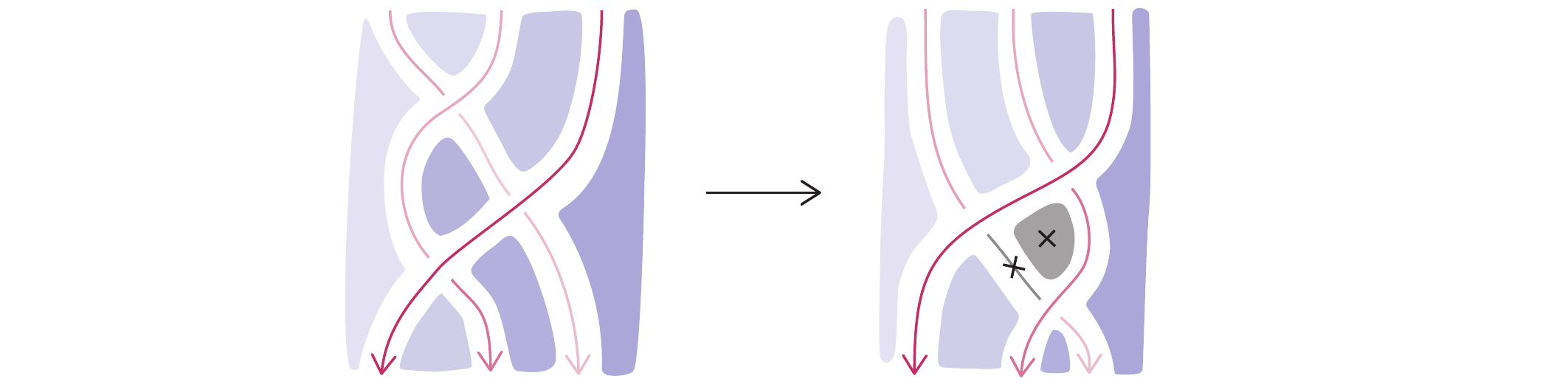}

\caption{Marked arc and region created after a Reidemeister III move.}

\label{fig:r3}
\end{figure}

\begin{example}
For a quandle $X$, the number of $X$-colorings of a knotoid diagram $D$ is an invariant denoted $col_X(D)$. The proof of this follows from the proof of the shadow coloring number's invariance. It's inferred from Figure \ref{fig:2_4} that  $col_{R_n}(2_4)=col_{R_n}(3_2)=n$ while $col_{R_3}(\text{mir}(2_4))>3$ and $col_{R_5}(\text{mir}(3_2))>5$, i.e. $2_4$ and $3_2$ are chiral.

\end{example}

\begin{figure}[ht]
\begin{overpic}[unit=.434mm,scale=.8]{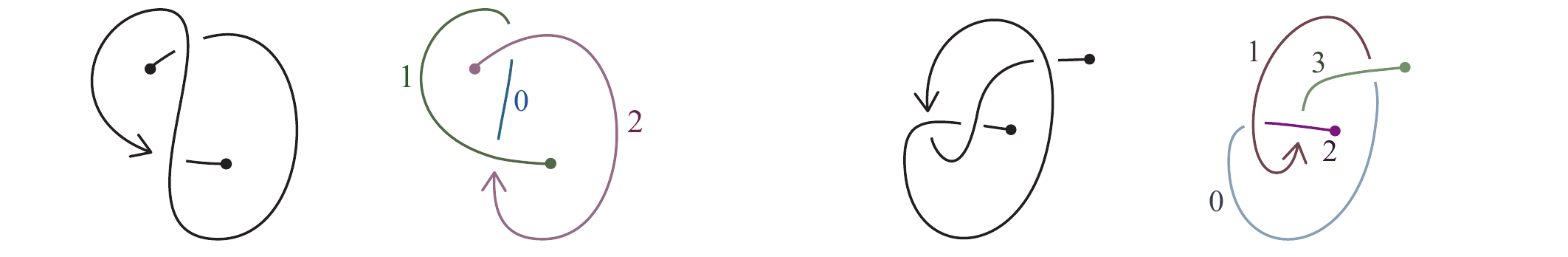}\put(77,4){$2_4$}\put(159,4){mir($2_4$)}\put(265,4){$3_2$}\put(348,4){mir($3_2$)}
\end{overpic}
\caption{A 3-coloring of the planar knotoid mir($2_4$) and a 5-coloring of the classical knotoid mir($3_2$).}
\label{fig:2_4}
\end{figure}

\begin{figure}

\begin{overpic}[unit=.434mm,scale=.8]{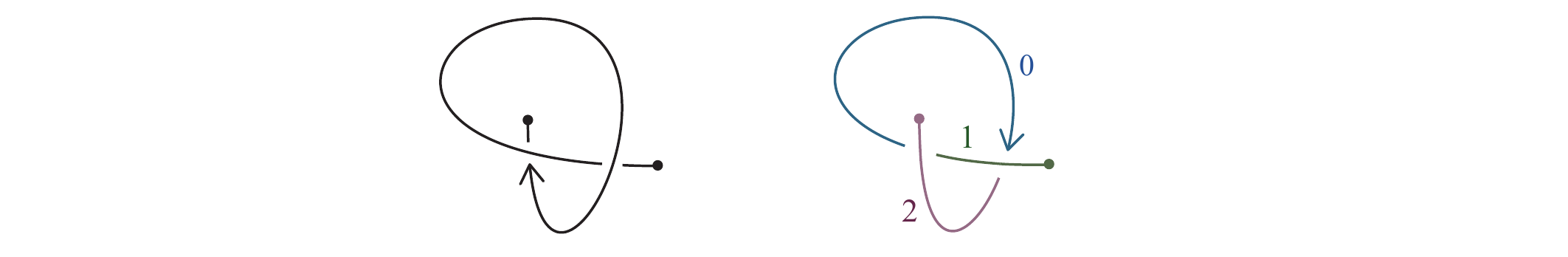}\put(163,4){$2_1$}\put(263,4){mir($2_1$)}

\end{overpic}

\caption{Knotoid diagrams of $2_1$ and mir($2_1$).}
\label{fig:2_1}
\end{figure}

\begin{example}

Consider the knotoid diagram of $2_1$ in Figure \ref{fig:2_1}. This diagram is $(R_3,R_3)$-colorable. Since $2y-x=x$ implies $y=x$ over $\mathbb{Z}/3\mathbb{Z}$, a choice of the infinite region's color determines the color of the head arc. The head arc and infinite region's color uniquely determines a shadow coloring, a trivial one. Therefore, $col_{(R_3,R_3)}(2_1)=3$. Now consider the diagram of mir($2_1$) in Figure \ref{fig:2_1}. This diagram is also $(R_3,R_3)$-colorable.  As before, a choice of the infinite region's color determines the color of the head arc. A second choice of color for either remaining arc uniquely determines a $(R_3,R_3)$-coloring. Thus, $col_{(R_3,R_3)}(\text{mir}(2_1))=9$. Since $col_{(R_3,R_3)}(2_1)\neq col_{(R_3,R_3)}(\text{mir}(2_1))$, the classical knotoid $2_1$ is chiral.
\label{ex:2_1}

\end{example}

\begin{theorem}

The shadow coloring number distinguishes infinitely many knotoids from their mirrors.
\label{thm:colornumber}
\end{theorem}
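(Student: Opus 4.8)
The plan is to bootstrap from Example~\ref{ex:2_1}, where $col_{(R_3,R_3)}(2_1)=3\neq 9=col_{(R_3,R_3)}(\mathrm{mir}(2_1))$, to an infinite family by taking knotoid products. Let $\kappa_n$ denote the $n$-fold product $2_1\cdots 2_1$ (so $\kappa_1=2_1$). First I would prove a multiplicativity (convolution) formula for the shadow coloring number under the knotoid product. Fix a quandle $X$ and an $X$-set $Y$. For a knotoid diagram $D$, let $N^{\mathrm{leg}}_{D}(x,y)$ be the number of $(X,Y)$-colorings of $D$ whose value on the arc meeting the leg is $x$ and whose value on the region meeting the leg is $y$, and define $N^{\mathrm{head}}_{D}(x,y)$ analogously at the head. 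Using Turaev's concatenation model of the product — shrink a diagram $D'$ of the second factor into the region of $D$ containing the leg of $D$, identifying the leg point of $D$ with the head point of $D'$ — one sees that an $(X,Y)$-coloring of the product diagram restricts to $(X,Y)$-colorings of $D$ and of $D'$ that agree on the single arc and the single region created at the junction, and that conditions (iii)--(iv) impose there exactly the endpoint relation $y^{x}=y$, which is automatically inherited from either factor; conversely any matching pair glues back. Hence
\[
  col_{(X,Y)}(\kappa\kappa') \;=\; \sum_{(x,y)\in X\times Y} N^{\mathrm{leg}}_{D}(x,y)\, N^{\mathrm{head}}_{D'}(x,y),
\]
and by an easy induction this gives closed forms for $col_{(R_3,R_3)}(\kappa_n)$ and for $col_{(R_3,R_3)}(\mathrm{mir}(\kappa_n))$.

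Next I would carry out the two base computations with $X=Y=R_3$. The analysis in Example~\ref{ex:2_1} shows that every $(R_3,R_3)$-coloring of the displayed $2_1$ diagram is constant (all arcs and regions receive the colour of the infinite region), so $N^{\mathrm{leg}}_{2_1}(x,y)=N^{\mathrm{head}}_{2_1}(x,y)=\delta_{x,y}$; feeding this into the product formula inductively gives $col_{(R_3,R_3)}(\kappa_n)=3$ for all $n$. For $\mathrm{mir}(2_1)$, the endpoint relation $2x-y=y$ in $R_3$ still forces the arc at an endpoint to share the colour of the adjacent region, but there is one genuinely free $R_3$-parameter, so $N^{\mathrm{head}}_{\mathrm{mir}(2_1)}(x,y)=N^{\mathrm{leg}}_{\mathrm{mir}(2_1)}(x,y)=3\,\delta_{x,y}$; the only point needing care is that the nine colorings distribute evenly over the three allowed endpoint values, which follows because the affine automorphisms $x\mapsto x+k$ of $R_n$ act on $(X,Y)$-colorings and act transitively on the endpoint colour. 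Since reflection is compatible with concatenation (and $2_1$ and its mirror each have a diagram whose head region is outer), $\mathrm{mir}(\kappa_n)=\mathrm{mir}(2_1)\cdots\mathrm{mir}(2_1)$, and the product formula yields $col_{(R_3,R_3)}(\mathrm{mir}(\kappa_n))=3^{\,n+1}$ — or, if one prefers to avoid the even-distribution check, at least a value strictly larger than $3$ that is strictly increasing in $n$.

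Finally I would assemble the conclusion. For every $n\geq 1$, $col_{(R_3,R_3)}(\kappa_n)=3\neq col_{(R_3,R_3)}(\mathrm{mir}(\kappa_n))$, so since the shadow coloring number is a knotoid invariant, each $\kappa_n$ is chiral. Because $col_{(R_3,R_3)}(\mathrm{mir}(\kappa_n))$ is strictly monotone in $n$ and $\mathrm{mir}(\cdot)$ is a bijection on the set of knotoids, the $\mathrm{mir}(\kappa_n)$ — hence the $\kappa_n$ — are pairwise inequivalent, so $\{\kappa_n\}_{n\geq 1}$ is an infinite family of distinct knotoids each distinguished from its mirror by the shadow coloring number.

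The main obstacle is the first step: setting up the coloring/product correspondence so that the endpoint region conditions (iii)--(iv) glue correctly at the junction, and pinning down precisely which region of each factor becomes the junction region of the product. Once the convolution formula is in hand, the remaining work is bookkeeping with the small $R_3$-coloring tables already implicit in Example~\ref{ex:2_1}, together with the elementary observation about the affine symmetries of $R_n$.
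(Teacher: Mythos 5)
Your proposal is correct and follows essentially the same route as the paper: both arguments take the $n$-fold products of $2_1$ and of $\mathrm{mir}(2_1)$, count $(R_3,R_3)$-colorings factor by factor on the concatenated diagram, and arrive at $3$ versus $3^{n+1}$. Your version merely packages the paper's informal recursion as an explicit convolution formula at the junction arc and region, and adds the (worthwhile, and only implicit in the paper) observation that the strict growth of $col_{(R_3,R_3)}(\mathrm{mir}(\kappa_n))$ makes the family pairwise inequivalent.
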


\begin{proof}

\begin{figure}[ht]
\begin{overpic}[unit=.434mm,scale=.8]{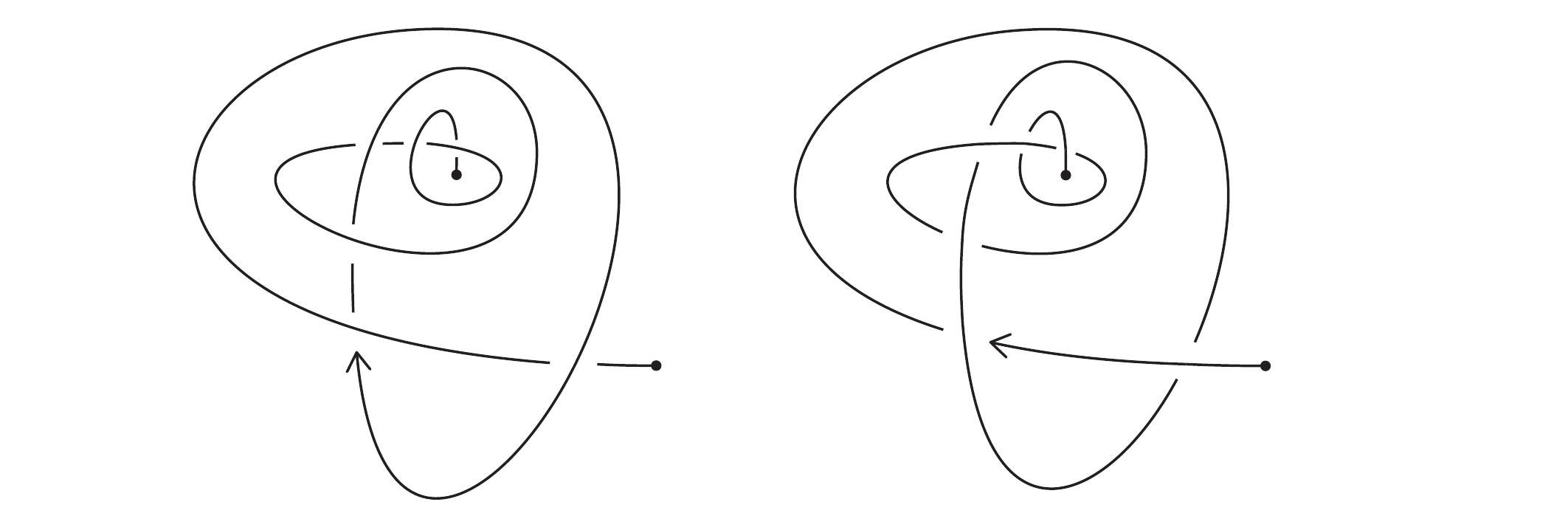}\put(55,10){$\prod_{i=1}^32_1$}\put(190,10){$\prod_{i=1}^3\text{mir}(2_1)$}
\end{overpic}

\caption{Product diagrams of $2_1$ and mir($2_1$) powers.}
\label{fig:product21}
\end{figure}

Consider the knotoid diagrams of the products $\prod_{i=1}^n 2_1$ and $\prod_{i=1}^n \text{mir}(2_1)$ in Figure \ref{fig:product21}. A recursion of $2_1$ and mir($2_1$)'s $(R_3,R_3)$-coloring choices determines the number of $(R_3,R_3)$-colorings of the products. In the case of $\prod_{i=1}^n 2_1$, the infinite region's color uniquely determines a trivial coloring as in Example \ref{ex:2_1}. Therefore,

\[ 
col_{(R_3,R_3)}\left(\prod_{i=1}^n 2_1\right)=3.
\]

In the case of $\prod_{i=1}^n \text{mir}(2_1)$, the infinite region's color only determines the head arc's color. There are 3 choices of colors for the remaining arcs of the first mir($2_1$) factor. This choice also colors the first arc of the second factor. There are 3 choices of colors for the remaining arcs of the second mir($2_1$) factor. This inductively continues for each factor. Once the arcs are colored, the infinite region's color determines the color of the remaining regions. Thus,

\[ 
col_{(R_3,R_3)}\left(\prod_{i=1}^n \text{mir}(2_1)\right)=3^{n+1}.
\]

\end{proof}

\begin{remark}
Shadow quandle colorings and the shadow coloring number naturally extend to links and multi-linkoids.
\end{remark}

\section{The Shadow Quandle Cocycle Invariant of Knots and Knotoids}
\label{sec4}

Let $X$ be a quandle and $Y$ an $X$-set. Let $C_n^R(X)_Y$ denote the free abelian group generated by the elements of the Cartesian product $Y\times X^n$. When $n=0$, $C_0^R(X)_Y=\mathbb{Z}[Y]$. If $n<0$, then $C_n^R(X)_Y=0$. Define a homomorphism $\partial_n:C_n^R(X)_Y\to C_{n-1}^R(X)_Y$ by \begin{align*} \partial_n(y,x_1,\dots,x_n)=\sum_{i=1}^n & (-1)^i(y,x_1,\dots,\widehat x_i,\dots,x_n)\\ -& \sum_{i=1}^n(-1)^i(y*{x_i},x_1^{x_i},\dots,x_{i-1}^{x_i},\widehat x_i, x_{i+1}, \dots, x_n). \end{align*}

\noindent Then $(C_n^R(X)_Y,\partial_n)$ is a chain complex \cites{kamada2017surface,nosaka2017quandles,CKS,elhamdadi2015quandles}. Let $C_n^D(X)_Y$ denote the subgroup of $C_n^R(X)_Y$ generated by all elements $(y,x_1,\dots,x_n)$ such that $x_i=x_{i+1}$ for some $i$. For $n<2$, put $C_n^D(X)_Y=0$. Then $(C_n^D(X),\partial_n)$ is a chain sub-complex of $(C_n^R(X)_Y,\partial_n)$. Define $C_n^Q(X)_Y:=C_n^R(X)_Y/C_n^D(X)_Y$ so that $(C_n^Q(X),\partial_n)_Y$ is a quotient complex. The homology groups of the chain complex $(C_n^Q(X),\partial_n)_Y$ are the {\it quandle homology groups} of $X$ with an $X$-set $Y$.

\begin{definition} Let $D$ be a diagram of an oriented knot or knotoid with co-orientation normals assigned to its arcs. Let $v$ be a crossing of $D$. Among the four regions around $v$ there is one such that both normals of its adjacent arcs point from the region. This region is called the {\it specified region} of $v$. A star will be placed in the corner of each crossing's specified region. The under arc adjacent to the specified region is called the {\it specified arc}.

\end{definition}

\begin{figure}[ht]

\begin{overpic}[unit=.434mm,scale=.8]{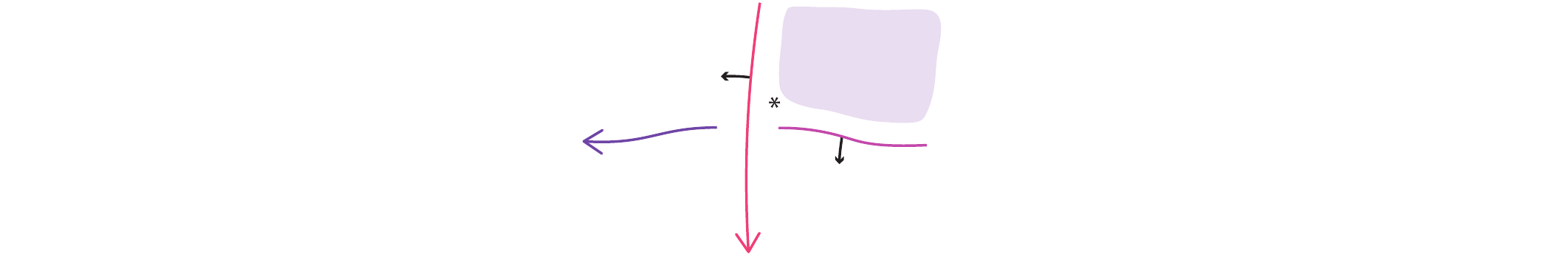}\put(221,21){$x_1$}\put(180,56){$x_2$}\put(217,48){$y$}
\end{overpic}

\label{fig:specified}
\caption{Starred specified region of a colored crossing.}
\end{figure}

\begin{definition}
Let $X$ be a quandle, $Y$ and $X$-set, $A$ an abelian group, $f\in$ Hom($C_2^R(X)_Y,A$) a 2-cochain, and $c$ an $(X,Y)$-coloring of an oriented knot or knotoid diagram $D$. For a crossing $v$ of $D$, let $W_f(v,c)\in A$ be defined by \[ W_f(v,c)= \begin{cases} 
      f(y,x_1,x_2) & \ \text{if  $v$ is positive} \\
          -f(y,x_1,x_2) & \text{if $v$ is negative}, \\
   \end{cases}\]
where $(y,x_1,x_2)$ are the colors of the specified region, the specified under-arc, and the over-arc. The value $W_f(v,c)$ is called the {\it local weight} at $v$, and $(y,x_1,x_2)$ is the {\it color} of the crossing. The {\it $f$-weight} of $D$ is the total of the local weights and is denoted by \[ W_f(D,c):=\sum_v W_f(v,c).\]

\end{definition}

\begin{definition} If $f$ satisfies the following two conditions, then the weight $W_f(D,c)$ is preserved under Reidemeister moves: 

\begin{enumerate}
\item[(i)] For any $y\in Y$ and any $x_1,x_2,x_3\in X,$ \begin{align*}
&-f(y,x_2,x_3)+f(y*x_1,x_2,x_3)+f(y,x_1,x_3) \\ 
&-f(y*x_2,x_1^{x_2},x_3)-f(y,x_1,x_2)+f(y*x_3,x_1^{x_3},x_2^{x_3})=0
\end{align*}

\item[(ii)] For any $y\in Y$ and $x\in X$, $f(y,x,x)=0$.

\end{enumerate}
In fact, these conditions are necessary and sufficient for a 2-cochain $f\in C_2^R(X)_Y\to A$ to be a 2-cocycle of the complex Hom($C_*^Q(X)_Y,A$) \cites{kamada2017surface,nosaka2017quandles,CKS,elhamdadi2015quandles}. Such a homomorphism $f:Y\times X^2\to A$ satisfying the two conditions is called a {\it quandle 2-cocycle} of $(X,Y)$.

\end{definition}

\begin{theorem}[Theorem 9.4.5 of \cite{kamada2017surface}]

Let $f$ be a quandle 2-cocycle of $(X,Y)$ and $D$ an oriented diagram of a knot $K$. Define $\Phi_f(D):=\{W_f(D,c)| c\in Col_{(X,Y)}(D)\}$ as a multiset. It is an invariant of $K$, and if $f$ and $f'$ are cohomologous, then $\Phi_f(K)=\Phi_{f'}(K)$.
\label{thm:knot}
\end{theorem}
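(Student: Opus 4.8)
The plan is to leverage the bijections between coloring sets already furnished by Proposition~\ref{prop:knot}. For each Reidemeister move relating diagrams $D$ and $D'$ of $K$, the unique-extension argument there produces a canonical bijection $\rho\co Col_{(X,Y)}(D)\to Col_{(X,Y)}(D')$. To prove that $\Phi_f$ is an invariant it therefore suffices to show $W_f(D,c)=W_f(D',\rho(c))$ for every coloring $c$ and every Reidemeister move; planar isotopy affects neither the crossings nor their colors, so it is harmless. Since $\rho$ is a bijection, equality of the weights under $\rho$ gives equality of the multisets $\Phi_f(D)=\Phi_f(D')$, and composing over a finite sequence of moves connecting any two diagrams of $K$ yields the invariance. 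The statement that cohomologous cocycles give the same invariant is then a separate computation, carried out last.

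First I would check Reidemeister~I. The move inserts or deletes a single crossing whose over-arc and specified under-arc carry the same color $x$, and whose specified region has some color $y$, while $\rho$ leaves all other crossings and their colors fixed. Hence the change in $W_f$ is $\pm f(y,x,x)$, which is $0$ by cocycle condition~(ii) --- this is exactly the degeneracy condition that makes $f$ descend to the quotient complex $C_*^Q(X)_Y$. For Reidemeister~II, $\rho$ adds two new crossings, one positive and one negative, while the remaining crossings keep their colors; propagating the colors through the bigon and identifying the specified region at each of the two new crossings from the co-orientation normals shows that the two new local weights are negatives of one another and hence cancel. This is a routine local verification once orientation conventions are fixed.

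The essential case is Reidemeister~III, and it suffices to treat one representative of the move, the remaining versions following by combining it with Reidemeister~II moves or by entirely analogous computations. I would label the three strands by their colors on one side of the move, propagate these colors through all three crossings using the $X$-coloring rule, and do the same on the other side, noting that $\rho$ forces the colors of the arcs and regions lying outside the supporting disk to agree on the two sides. The six crossings involved --- three before the move, three after --- then carry exactly the six triples $(y,x_2,x_3)$, $(y*x_1,x_2,x_3)$, $(y,x_1,x_3)$, $(y*x_2,x_1^{x_2},x_3)$, $(y,x_1,x_2)$, $(y*x_3,x_1^{x_3},x_2^{x_3})$ appearing in condition~(i). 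Matching the sign of each local weight to the positive/negative type of its crossing (and to whether the crossing lies before or after the move), the difference $W_f(D',\rho(c))-W_f(D,c)$ becomes precisely the left-hand side of~(i), hence vanishes. The delicate point throughout --- and the main obstacle to a fully written-out proof --- is the bookkeeping of which of the four regions at each crossing is the specified one (the region both of whose adjacent co-orientation normals point away from it), since that choice fixes both the region color $y$ and the specified under-arc color $x_1$ that feed into $W_f$; the stars placed in the specified corners in the definition of $W_f$ are exactly the device for keeping this straight across the move.

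Finally, for the dependence on the cohomology class, suppose $f'=f+\delta g$ for a quandle $1$-cochain $g\in\mathrm{Hom}(C_1^R(X)_Y,A)$. For a fixed coloring $c$ I would expand $W_{f'}(D,c)-W_f(D,c)=\sum_v \pm(\delta g)(y_v,x_{1,v},x_{2,v})$ over the crossings $v$, write out each term using the formula for $\partial_2$, and regroup the resulting $g$-values according to the arc and region of $D$ to which each is attached. Because $D$ is a closed diagram, the $g$-contributions cancel in pairs as one moves along each arc between consecutive crossings (equivalently, around each region), so $W_{f'}(D,c)=W_f(D,c)$ for every $c$; hence the multisets $\Phi_f(K)$ and $\Phi_{f'}(K)$ coincide. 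This completes the proof modulo the routine local checks indicated above.
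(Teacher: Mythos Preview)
The paper does not supply its own proof of this theorem: it is quoted as Theorem~9.4.5 of \cite{kamada2017surface} and left uncited, with the subsequent knotoid version proved only by reference to the literature and a two-sentence sketch (``analyzing the local weights of the crossings around a Reidemeister move'' and ``examining the 1-cochain whose boundary is the difference $f-f'$''). Your proposal is exactly the standard argument those references contain and that the paper's sketch alludes to, so there is nothing substantive to compare; your outline is correct and agrees with the intended approach.
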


For a subset $Y_0\subset Y$, let $Col_{(X,Y)}(D)_{Y_0}$ denote the subset of $Col_{(X,Y)}(D)$ consisting of $(X,Y)$-colorings such that the infinite region is colored by elements of $Y_0$. Then \[ \Phi_f(D)_{Y_0}:=\{W_f(D,c)| c\in Col_{(X,Y)}(D)_{Y_0}\}\] is also an invariant.

\begin{theorem}

Let $f$ be a quandle 2-cocycle of $(X,Y)$ and $D$ an oriented diagram of a knotoid $\kappa$. Define $\Phi_f(D):=\{W_f(D,c)| c\in Col_{(X,Y)}(D)_{Y_0}\}$ as a multiset. It is an invariant of $\kappa$, and if $f$ and $f'$ are cohomologous, then $\Phi_f(\kappa)=\Phi_{f'}(\kappa)$.

\end{theorem}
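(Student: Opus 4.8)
The plan is to reduce both assertions to the knot case, Theorem~\ref{thm:knot}, and then to absorb the extra data carried by the head and the leg. Two things must be established: invariance of the multiset $\Phi_f(D)$ under Reidemeister moves and planar isotopy, and invariance under replacing $f$ by a cohomologous cocycle.

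For the first, a Reidemeister move on a knotoid diagram is supported in a disk disjoint from the head and the leg, so inside and near that disk the diagram is literally a piece of a knot diagram and the change in $W_f(D,c)$ is governed by exactly the same local pictures as for knots: condition (i) on $f$ is precisely the relation ensuring that the total weight is unchanged by a Reidemeister~III move, condition (ii), $f(y,x,x)=0$, annihilates the weight of the crossing created by a Reidemeister~I move, and the two crossings of a Reidemeister~II move carry opposite signs and the same crossing color and so cancel. Combined with the unique-extension bijection $Col_{(X,Y)}(D)\to Col_{(X,Y)}(D')$ used to prove that the shadow coloring number is a knotoid invariant, this produces a bijection of coloring sets preserving the $f$-weight. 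Since the supporting disk of a move is bounded it is disjoint from the infinite region, and the unique extension does not alter the color of any arc or region meeting the complement of that disk; in particular the color of the infinite region, and the colors of the head- and leg-arcs and of the regions touching them, are all preserved, so condition (iv) continues to hold and the bijection restricts to one between $Col_{(X,Y)}(D)_{Y_0}$ and $Col_{(X,Y)}(D')_{Y_0}$. Planar isotopy obviously preserves the infinite region. Hence $\Phi_f(D)$ depends only on $\kappa$.

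For the second, write $f'=f+\delta g$ with $g$ a quandle $1$-cochain of $(X,Y)$; since $W_{f'}(D,c)=W_f(D,c)+W_{\delta g}(D,c)$ for every $(X,Y)$-coloring $c$, it suffices to show $W_{\delta g}(D,c)=0$. Expanding $\delta g$ at a crossing with color $(y,x_1,x_2)$ gives four terms $\pm g(r,a)$, each pairing the color $r$ of a region at the crossing with the color $a$ of an incident arc; the crossing sign cancels a geometric sign, so that, after regrouping the whole sum according to the maximal over-arcs of $D$, the terms contributed by the over-strand along a fixed over-arc $\alpha$ telescope and the two resulting leftovers cancel against the two terms in which $\alpha$ appears as an under-arc at its endpoints --- this is the bookkeeping underlying Theorem~\ref{thm:knot}. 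For a knot every over-arc has two endpoints at undercrossings and the sum collapses to $0$; for a knotoid the head-arc and the leg-arc each have only one such endpoint, the other ``end'' being the head or the leg, where they meet no crossing, so a single term survives at each endpoint and one is left with
\[
W_{\delta g}(D,c)=g(y_\ell,x_\ell)-g(y_h,x_h),
\]
where $x_h,x_\ell$ are the colors of the head- and leg-arcs and $y_h,y_\ell$ the colors of the regions meeting them. By condition (iv), $y_h^{x_h}=y_h$ and $y_\ell^{x_\ell}=y_\ell$. Showing that this forces $g(y_\ell,x_\ell)-g(y_h,x_h)=0$ --- equivalently, that a $1$-cochain of the shadow quandle complex kills the degree-$1$ chains $(y,x)$ with $y^x=y$ --- is the heart of the matter and exactly the reason condition (iv) is built into the notion of an $(X,Y)$-coloring of a knotoid; granting it, $W_{f'}(D,c)=W_f(D,c)$ for every $c$ and hence $\Phi_f(\kappa)=\Phi_{f'}(\kappa)$.

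I expect the first paragraph to be routine: it is the knot proof plus the single remark that Reidemeister moves avoid the endpoints and the infinite region. The main obstacle lies in the second --- carefully tracking signs and region--arc incidences in the telescoping, and, above all, proving that the surviving endpoint term $g(y_\ell,x_\ell)-g(y_h,x_h)$ vanishes by virtue of condition (iv).
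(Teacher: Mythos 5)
Your first paragraph is correct and is essentially the paper's own (much terser) argument for the first assertion: Reidemeister moves are supported away from the endpoints, conditions (i) and (ii) on $f$ make the local weights cancel exactly as in the proof of Theorem~\ref{thm:knot}, and the unique-extension bijection on colorings does not touch the infinite region or the endpoint arcs, so it restricts to a weight-preserving bijection $Col_{(X,Y)}(D)_{Y_0}\to Col_{(X,Y)}(D')_{Y_0}$. Nothing in that part is missing.

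The gap is in your second paragraph, and you have located exactly the right spot but then assumed the difficulty away. Your telescoping computation is right: writing $f'-f=\delta g$, the sum $W_{\delta g}(D,c)$ collapses along the strand to $g(y_\ell,x_\ell)-g(y_h,x_h)$, the evaluations of $g$ on the (region, arc) pairs at the leg and the head. But the step you propose to finish with --- that a quandle $1$-cochain annihilates the chains $(y,x)$ with $y^x=y$ --- is false: the degenerate subcomplex $C_n^D(X)_Y$ is zero for $n<2$, so $C_1^Q(X)_Y=C_1^R(X)_Y$ and a $1$-cochain is an arbitrary homomorphism $\mathbb{Z}[Y\times X]\to A$. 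Condition (iv) constrains the \emph{coloring} (it forces $y_h^{x_h}=y_h$), not the cochain $g$. The surviving term does vanish whenever $(y_h,x_h)=(y_\ell,x_\ell)$, which is what happens in all of the paper's computations (for knot-type knotoids colored by $R_3$ with $Y_0=\{0\}$ both endpoint arcs and their adjacent regions are forced to be $0$); but for a general knotoid the head and leg lie in different regions and the endpoint arcs can carry different colors (see Example~\ref{ex:2_1}), so $g(y_\ell,x_\ell)-g(y_h,x_h)$ need not vanish and individual weights genuinely change when $f$ is replaced by a cohomologous cocycle. To be fair to you, the paper's own proof of the cohomologous statement is a single sentence (``a geometric consequence of examining the 1-cochain whose boundary is the difference $f-f'$'') that does not address this boundary term either; you have not so much failed to reproduce the paper's argument as exposed that the second assertion requires either an additional hypothesis (for instance that the two endpoint pairs coincide, or a normalization of $g$ on pairs with $y^x=y$) or an argument that neither you nor the paper supplies.
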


\begin{proof}

This follows from the proof of Theorem \ref{thm:knot} seen in \cites{kamada2017surface,CKS,shadowquandle,nosaka2017quandles,elhamdadi2015quandles}. The proof of the former statement follows from analyzing the local weights of the crossings around a Reidemeister move. The $f$-weight does not change after a Reidemeister move. The proof of invariance among homology class representatives is a geometric consequence of examining the 1-cochain whose boundary is the difference $f-f'$.

\end{proof}

\begin{definition} Let $D$ be a diagram of an oriented knot or knotoid. Let $A$ be an abelian group written multiplicatively and $X$ a finite quandle. For a quandle 2-cocycle $f$ of $(X,Y)$,

\[ \Phi_f'(D):=\sum_{c\in Col_{(X,Y)}(D)}\prod_v W_f(v,c) \]
is an element of the group ring $\mathbb{Z}[A]$ and is an invariant of an oriented knot. This invariant is called the {\it shadow quandle cocycle invariant}. For a subset $Y_0\subset Y$, the invariant $\Phi_f'(D)_{Y_0}$ is also defined by restricting the colorings to $Col_{(X,Y)}(D)_{Y_0}$.

\end{definition}

When $X=Y$, a {\it quandle 3-cocycle} of $X$ is a quandle 2-cocycle of $(X,Y)$.

\begin{definition} Let $X$ be a quandle, $A$ an abelian group, and $\mathbb{Z}(X^3)$ the free $\mathbb{Z}$-module generated by the elements of $X^3=X\times X\times X$. A homomorphism $f: \mathbb{Z}(X^3)\to A$ is a {\it quandle 3-cocycle} of $X$ if the following conditions are satisfied: 

\begin{enumerate}
\item[(i)] For any $x_0,x_1,x_2,x_3\in X,$ \begin{align*}
&-f(x_0,x_2,x_3)+f(x_0^{x_1},x_2,x_3)+f(x_0,x_1,x_3) \\ 
&-f(x_0^{x_2},x_1^{x_2},x_3)-f(x_0,x_1,x_2)+f(x_0^{x_3},x_1^{x_3},x_2^{x_3})=0
\end{align*}

\item[(ii)] For any $x,y\in X$, $f(y,x,x)=f(x,x,y)=0$.

\end{enumerate}

\end{definition}

 There is an associated chain and cochain complex of $X$. Quandle 3-cocycles are cocycles of this cochain complex and represent cohomology classes of $H^3_{Q}(X,A)$,  see \cites{cocycle,CKS,Carter1999ComputationsOQ,kamada2017surface,nosaka2017quandles}. These complexes are analogous to the complexes defined in the beginning of this section with the $X$-set $Y$ omitted.

\begin{theorem}[Mochizuki '10 \cite{mochi}]

Let $m>0$ be any odd integer, and let $p$ be any prime. Then, $H_Q^3(R_m,\mathbb{Z}_p)\cong \mathbb{Z}_p$ if $m$ is divisible by $p$ and $H_Q^3(R_m,\mathbb{Z}_p)\cong 0$ otherwise.

\end{theorem}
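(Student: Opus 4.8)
The plan is to reduce the statement to a low-degree \emph{integral} computation of quandle homology of $R_m$ via universal coefficients, and then to carry out that computation using the Alexander-quandle structure of $R_m$. First, note that when $p \mid m$ the prime $p$ is automatically odd, since $m$ is. Applying the universal coefficient theorem for cohomology to the quandle chain complex $C_*^{Q}(R_m)$ (a complex of free abelian groups) gives a short exact sequence
\[
0 \longrightarrow \operatorname{Ext}\!\bigl(H_2^{Q}(R_m;\mathbb{Z}),\mathbb{Z}_p\bigr) \longrightarrow H^3_{Q}(R_m;\mathbb{Z}_p) \longrightarrow \operatorname{Hom}\!\bigl(H_3^{Q}(R_m;\mathbb{Z}),\mathbb{Z}_p\bigr) \longrightarrow 0 .
\]
So it suffices to identify the integral groups $H_2^{Q}(R_m;\mathbb{Z})$ and $H_3^{Q}(R_m;\mathbb{Z})$ for odd $m$, and I claim these are $H_2^{Q}(R_m;\mathbb{Z}) \cong 0$ and $H_3^{Q}(R_m;\mathbb{Z}) \cong \mathbb{Z}_m$. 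Granting this, the $\operatorname{Ext}$-term vanishes for every $p$, and $\operatorname{Hom}(\mathbb{Z}_m,\mathbb{Z}_p) \cong \mathbb{Z}_p$ exactly when $p \mid m$ and is $0$ otherwise --- which is precisely the asserted dichotomy.

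The substance is therefore the integral computation, and this is where I expect the main obstacle to lie. I would work with the normalized quandle chain complex $C^{Q}_*(R_m)$ truncated in degrees $\le 4$; its chain groups are free abelian of ranks $m,\, m^2,\, m^3,\, m^4$, so a computation uniform in $m$ demands structure rather than brute force. The point is that $R_m$ is the Alexander quandle $\mathbb{Z}_m[t^{\pm 1}]/(t+1)$, so after the change of variables $(x_1,\dots,x_n) \mapsto (x_1,\, x_2 - x_1,\, \dots,\, x_n - x_{n-1})$ one can use the induced $\mathbb{Z}_m$-module structure to replace the relevant truncation by a much smaller ``reduced'' subcomplex, in the spirit of Mochizuki's polynomial-cochain method and of the dihedral computations of Niebrzydowski--Przytycki. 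Over that small complex the differentials become explicit matrices with entries in $\mathbb{Z}_m$, and computing Smith normal forms should yield $H_2^{Q} = 0$ and $H_3^{Q} \cong \mathbb{Z}_m$. The delicate part is choosing the reduction carefully enough that the surviving differential can be analyzed in closed form for all odd $m$ simultaneously --- in particular, ruling out any extra $p$-torsion when $p^2 \mid m$, so that $H_3^Q(R_m;\mathbb Z)$ is genuinely cyclic of order $m$.

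Finally, for the existence half when $p \mid m$ it is cleanest to exhibit the generator explicitly rather than only extract it abstractly. After reducing all entries modulo $p$, set
\[
\theta_p(x,y,z) \;=\; (x-y)\,\frac{(2z-y)^{p} + y^{p} - 2z^{p}}{p} \;\in\; \mathbb{Z}_p .
\]
The numerator is divisible by $p$ by Fermat's little theorem, the quotient is well defined modulo $p$ independently of the integer lifts chosen, and one checks by a finite polynomial identity that $\theta_p$ satisfies conditions (i) and (ii) in the definition of a quandle $3$-cocycle above. It then remains to see that $\theta_p$ is not a coboundary; this follows by evaluating the associated shadow cocycle state sum on a suitable knot --- for instance the left-hand trefoil when $p=3$, whose invariant differs from that of its mirror --- or equivalently by pairing $\theta_p$ with an explicit quandle $2$-cycle on which it is nonzero. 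Combined with the bound $\dim_{\mathbb{Z}_p} H^{3}_{Q}(R_m;\mathbb{Z}_p) \le 1$ from the previous step, this pins down $H^{3}_{Q}(R_m;\mathbb{Z}_p) \cong \mathbb{Z}_p$.
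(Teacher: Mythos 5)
The paper does not actually prove this statement---it is quoted as a black box from Mochizuki's paper \cite{mochi}---so your proposal has to stand on its own rather than be measured against an internal argument. Its skeleton is reasonable: the universal coefficient sequence is set up correctly, the two integral facts you invoke ($H_2^Q(R_m;\mathbb{Z})\cong 0$ and $H_3^Q(R_m;\mathbb{Z})\cong\mathbb{Z}_m$ for odd $m$, due to Niebrzydowski--Przytycki) are true and do yield the stated dichotomy via $\operatorname{Hom}(\mathbb{Z}_m,\mathbb{Z}_p)$, and your displayed formula is (the pullback along $R_m\to R_p$ of) Mochizuki's generator, well defined by the freshman's-dream argument you indicate. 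The genuine gap is that essentially the entire content of the theorem sits in the integral computation, and that is precisely the step you do not carry out: ``computing Smith normal forms should yield $H_2^Q=0$ and $H_3^Q\cong\mathbb{Z}_m$'' is a plan, not a proof. Producing a reduced subcomplex whose differentials can be analyzed uniformly in $m$, and in particular ruling out extra $p$-torsion when $p^2\mid m$ (which is what forces cyclicity of order $m$), is exactly where the difficulty lives; you name it as ``the delicate part'' and leave it open. Note also that Mochizuki's own route avoids integral homology entirely: he works directly over $\mathbb{Z}_p$, identifies cochains with polynomial functions after the change of variables you mention, and determines cocycles and coboundaries degree by degree. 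If you want a self-contained proof, that direct mod-$p$ computation is the more tractable path to actually close.

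Two smaller points. Once $H_2^Q=0$ and $H_3^Q\cong\mathbb{Z}_m$ are granted, the universal coefficient theorem already gives the isomorphism $H^3_Q(R_m;\mathbb{Z}_p)\cong\operatorname{Hom}(\mathbb{Z}_m,\mathbb{Z}_p)$ on the nose, so your final paragraph exhibiting the explicit cocycle is logically redundant; it would only be needed if you had established merely an upper bound on the dimension. And if you do take that route, the nontriviality of the pulled-back cocycle on $R_m$ for a general prime $p\mid m$ does not follow from the $p=3$ trefoil calculation: you would need to pair it with an explicit nontrivial $3$-cycle of $C_*^Q(R_m)$ (not a $2$-cycle, unless you mean the shadow indexing in which a quandle $3$-cocycle of $X$ is a $2$-cocycle of $(X,X)$) for each such $p$.
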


\begin{example}

The cocycle $\theta_3:R_3\times R_3 \times R_3 \to \mathbb{Z}_3=\langle u |u^3=1\rangle$ given by  \[ \theta_3(x,y,z)=u^{(x-y)(y-z)z(x+z)}\]  generates $H_Q^3(R_3,\mathbb{Z}_3)$. This cocycle has been used to distinguish the trefoil from its mirror \cite{rourke}, distinguish the 2-twist-spun trefoil from its mirror \cites{cocycle,rourke}, and  calculate the triple point number of the 2-twist-spun trefoil \cite{satoh1}.


\end{example}

\begin{example}

Let $\phi:P_3\times P_3 \times P_3\to \mathbb{Z}_2\oplus \mathbb{Z}$ be given by \[ \phi (a,b,c) = \begin{cases} 
    1\oplus0 & \ (a,b,c)=(0,1,0),(0,2,0) \\
     0\oplus1 & \ (a,b,c)=(1,0,2),(2,0,1) \\
         0\oplus-1 & \ (a,b,c)=(1,0,1),(2,0,2) \\
             0\oplus0 & \ \text{otherwise.}
   \end{cases}\]
   
  \noindent The linear extension $\phi: \mathbb{Z}(P_3\times P_3 \times P_3)\to\mathbb{Z}_2\oplus\mathbb{Z}$ is quandle 3-cocycle of $P_3$. Oshiro used this linear extension as a symmetric quandle 3-cocycle to compute the triple point numbers of infinitely many non-orientable surface-links \cite{oshiro}.

\label{ex:p3}

\end{example}

\begin{remark}
The shadow quandle cocycle invariant naturally extends to links and multi-linkoids.
\end{remark}

\section{Applications of the Shadow Quandle Cocycle Invariant of Knotoids}
\label{sec5}

\begin{theorem}

The knotoid $3_1$ is chiral.
\label{thm:trefoil}
\end{theorem}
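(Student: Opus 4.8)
The plan is to evaluate the shadow quandle cocycle invariant coming from Mochizuki's generator $\theta_3$ of $H_Q^3(R_3,\mathbb{Z}_3)$ on a diagram of $3_1$ and on a diagram of $\text{mir}(3_1)$, and to check that the two outputs do not agree. Since $X=Y=R_3$, the quandle $3$-cocycle $\theta_3$ is a quandle $2$-cocycle of the pair $(R_3,R_3)$, so the knotoid version of the cocycle invariant theorem above applies: for any fixed $Y_0\subset R_3$ the multiset $\Phi_{\theta_3}(\kappa)_{Y_0}$, equivalently the element $\Phi'_{\theta_3}(\kappa)_{Y_0}\in\mathbb{Z}[\mathbb{Z}_3]$, is a knotoid invariant. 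Hence it is enough to exhibit one pair of diagrams on which this invariant separates $3_1$ from its mirror.

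First I would fix the standard three-crossing diagram $D$ of the knot-type knotoid $3_1$, namely the long-trefoil $1$--$1$ tangle with both endpoints pushed out into the infinite region, and take $\text{mir}(D)$ to be the diagram obtained from $D$ by switching all three crossings. Next I would enumerate $Col_{(R_3,R_3)}(D)$ (taking, say, $Y_0=R_3$): over $\mathbb{Z}/3$ the endpoint relation $c(r)^{c(a)}=c(r)$ forces each endpoint arc to carry the color of its adjacent region, every other region color is then obtained by propagating $c(r_2)=2c(a)-c(r_1)$ across arcs, and the arc colors themselves are exactly the Fox $3$-colorings of the underlying long trefoil, which split into the trivial colorings and the non-trivial ones. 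Then I would choose co-orientation normals on $D$, read off at each of the three crossings the specified region, the specified under-arc, and the over-arc together with the crossing sign, evaluate $W_{\theta_3}(v,c)=\pm\,\theta_3(y,x_1,x_2)\in\mathbb{Z}_3$, sum over the three crossings to get $W_{\theta_3}(D,c)$, and tabulate the resulting multiset. Running the identical computation on $\text{mir}(D)$ — same colorings, same cocycle, all three crossing signs reversed — yields a multiset which I expect to differ from that of $D$, mirroring the fact that the underlying (non-shadow) $\theta_3$-cocycle invariant of the trefoil is already $3+6u$ for one handedness and $3+6u^{2}$ for the other; concretely the non-trivial shadow colorings should contribute $u^{\pm1}$-type terms with opposite exponents on $D$ and $\text{mir}(D)$. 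Thus $\Phi'_{\theta_3}(D)_{Y_0}\neq\Phi'_{\theta_3}(\text{mir}(D))_{Y_0}$, and by invariance $3_1\neq\text{mir}(3_1)$.

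The real content is bookkeeping rather than theory: drawing $D$ so that the infinite region and the two endpoint arcs are unambiguous, fixing a single consistent convention for normals, specified regions, and crossing signs, and then computing the weight of every $(R_3,R_3)$-coloring without a sign slip, all while enforcing the restriction to $Col_{(X,Y)}(D)_{Y_0}$ so that what is compared really is the invariant of the theorem. The one non-formal point to verify is that the non-trivial contributions genuinely survive and genuinely flip under mirroring; Mochizuki's computation $H_Q^3(R_3,\mathbb{Z}_3)\cong\mathbb{Z}_3$ together with the classical trefoil evaluation makes an accidental cancellation very unlikely, but should it occur I would instead separate $3_1$ from $\text{mir}(3_1)$ using the shadow coloring number $col_{(R_3,R_3)}$ as in Example~\ref{ex:2_1} and Theorem~\ref{thm:colornumber}, or using the $P_3$ cocycle $\phi$ of Example~\ref{ex:p3}.
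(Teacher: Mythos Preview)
Your approach is exactly the one the paper takes: compute $\Phi'_{\theta_3}(\,\cdot\,)_{Y_0}$ with $(X,Y)=(R_3,R_3)$ on the standard three-crossing diagrams of $3_1$ and $\text{mir}(3_1)$ and observe that the two group-ring elements differ. The paper fixes $Y_0=\{0\}$, finds three $(R_3,R_3)$-colorings in each case (one trivial, two non-trivial related by the involution $1\leftrightarrow 2$), tabulates the crossing colors, and obtains $\Phi'_{\theta_3}(3_1)_{Y_0}=1+2u^2$ versus $\Phi'_{\theta_3}(\text{mir}(3_1))_{Y_0}=1+2u$.

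What you have written, however, is a plan rather than a proof. You say yourself that ``the real content is bookkeeping rather than theory,'' and that is precisely right: the theorem \emph{is} the computation, so deferring it leaves nothing proved. Your hedge ``yields a multiset which I expect to differ'' and the appeal to the classical (non-shadow) trefoil value are suggestive but do not replace actually reading off the three specified-region/specified-arc/over-arc triples and summing $\theta_3$ on them. To turn this into a proof you must carry that out.

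One concrete error in your contingency plan: the shadow coloring number does \emph{not} separate $3_1$ from $\text{mir}(3_1)$. With $Y_0=\{0\}$ one has $col_{(R_3,R_3)}(3_1)_{Y_0}=col_{(R_3,R_3)}(\text{mir}(3_1))_{Y_0}=3$ (and hence equality for $Y_0=R_3$ as well, by the $R_3$-symmetry you implicitly use). The situation in Example~\ref{ex:2_1} is special to $2_1$; for the knot-type knotoid $3_1$ the coloring count is symmetric under mirroring and the cocycle weight is genuinely needed.
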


\begin{proof}

 The knotoid diagrams of $3_1$ and mir($3_1$) in Figure \ref{fig:regioncolored} are $(R_3,R_3)$-colored. Since $2y-x=x$ implies $y= x$ over $\mathbb{Z}/3\mathbb{Z}$, the infinite region's color determines the colors of the head and leg arcs. With $Y_0=\{0\}$, the head and leg arcs must also be colored 0. Then, a choice of color for the marked arcs uniquely determines a $(R_3,R_3)$-coloring of the diagrams. Let $c_i$ be the coloring determined by coloring the marked arc $i$. Therefore, $col_{(R_3,R_3)}(3_1)_{Y_0}=col_{(R_3,R_3)}(\text{mir}(3_1))_{Y_0}=3$ with one trivial coloring and two non-trivial. The two non-trivial colorings differ by the permutation $0\mapsto 0$, $1\mapsto2$, $2\mapsto1$ and are illustrated in Figure \ref{fig:regioncolored}.

\begin{figure}[ht]

\begin{overpic}[unit=.434mm,scale=.8]{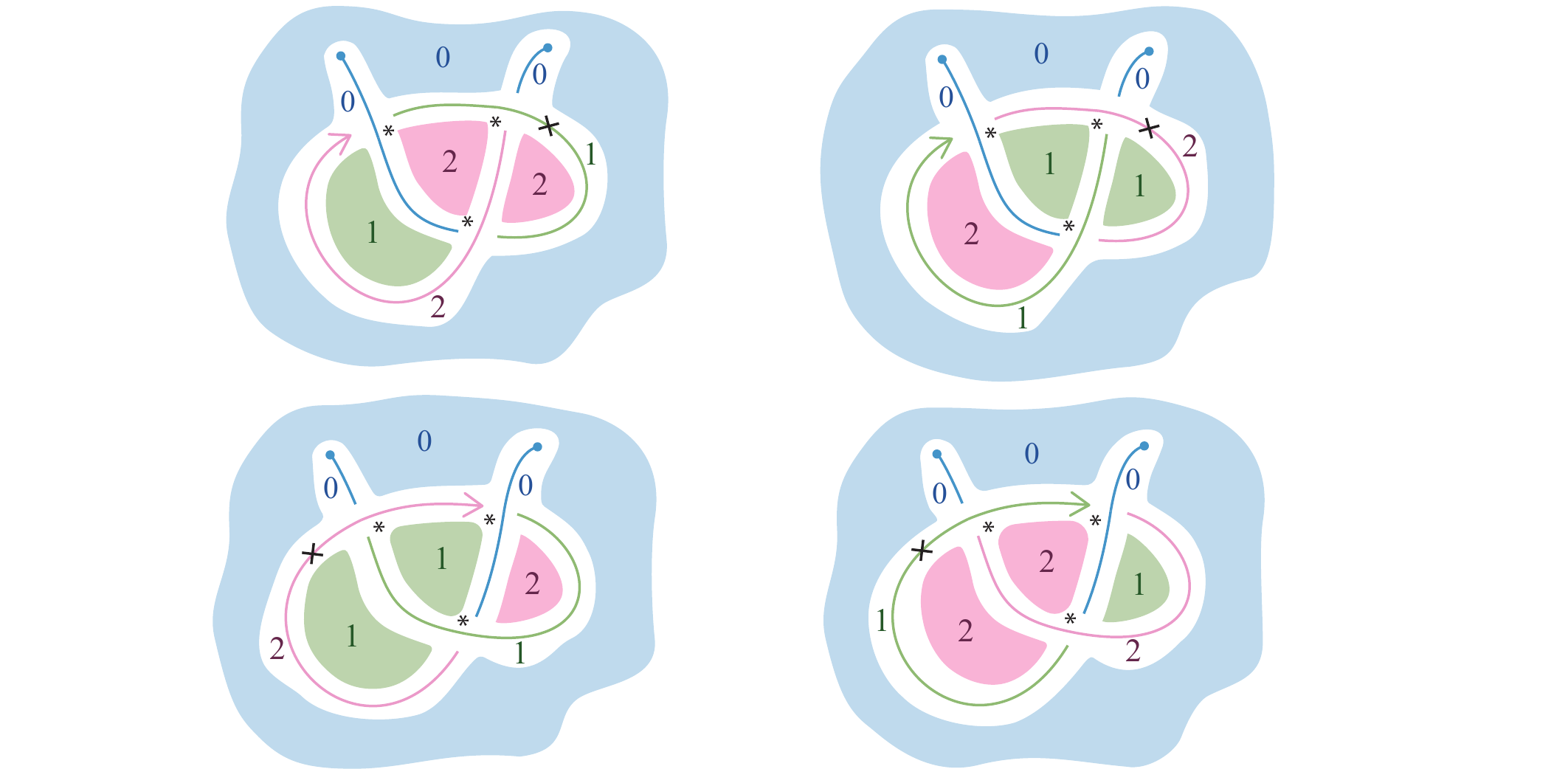}\put(156,110){$3_1$}\put(150,8){mir($3_1$)}

\put(309,110){$3_1$}\put(309,8){mir($3_1$)}
\end{overpic}

\caption{Non-trivial $(R_3,R_3)$-colorings of $3_1$ and mir($3_1$) with $Y_0=\{0\}$.}
\label{fig:regioncolored}
\end{figure}

Recall that $\theta_3:R_3\times R_3 \times R_3 \to \mathbb{Z}_3=\langle u |u^3=1\rangle$ given by  \[ \theta_3(x,y,z)=u^{(x-y)(y-z)z(x+z)}\]  generates $H_Q^3(R_3,\mathbb{Z}_3)$.  For each coloring, Table \ref{tab:color1} tabulates the 3-chains $\in C_3^Q(R_3)$ representing the sum of crossing colors and the $\theta_3$-weights of each coloring.

\begin{table}[ht]
\setlength{\tabcolsep}{10pt} 
\renewcommand{\arraystretch}{1.5}

\begin{tabular}{|c||c |c |}

\hline 
$c_i$ & Crossing Color Sum of $3_1$ & $W_{\theta_3}(3_1,c_i)$     \\ \hline

$c_0$ & $-(0,0,0)-(0,0,0)-(0,0,0)$& 1

 \\ \hline

$c_1$ & $-(2,2,1)-(2,0,2)-(2,1,0)$& $u^2$

\\\hline

$c_2$ & $-(1,1,2)-(1,0,1)-(1,2,0)$ & $u^ 2$

\\\hline

\end{tabular}

\vspace{5mm}

\begin{tabular}{|c||c |c |c |c| c| c |c |c |c| c| c| c| c| c| c| c| c| c| c| c| c| }

\hline 
$c_i$ & Crossing Color Sum of mir($3_1$)& $W_{\theta_3}(\text{mir}(3_1),c_i)$    \\ \hline

$c_0$ &$(0,0,0)+(0,0,0)+(0,0,0)$& 1

 \\ \hline

$c_1$ & $(2,1,0)+(2,0,2)+(2,2,1)$&$u$

\\\hline

$c_2$ &   $(1,2,0)+(1,0,1)+(1,1,2)$&$ u$

\\\hline

\end{tabular}

\vspace{5mm}

\caption{$(R_3,R_3)$-colorings of $3_1$ and mir($3_1$).}\label{tab:color1}
\end{table}

With respect to $(R_3,R_3)$, the shadow quandle cocycle invariants of these knotoids are

 \[\Phi'_{\theta_3}(3_1)_{Y_0}=1+2u^2\in \mathbb{Z}[\mathbb{Z}_3],\] and \[\Phi'_{\theta_3}(\text{mir}(3_1))_{Y_0}=1+2u\in \mathbb{Z}[\mathbb{Z}_3].\]

\end{proof}

\begin{corollary}

 For $Y_0=\{0\}$,

\[col_{(R_3,R_3)}\left(\prod_{i=1}^n \kappa_i\right)_{Y_0}=3^n,\]
where each knotoid $\kappa_i$ is either $3_1$ or mir$(3_1)$.

\end{corollary}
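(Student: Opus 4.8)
The plan is to mimic the inductive counting argument from the proof of Theorem~\ref{thm:colornumber} (the $\prod \text{mir}(2_1)$ computation), now tracking colorings whose infinite region receives the color $0$. I would work with the concatenation description of the knotoid product: a diagram $D_1 D_2 \cdots D_n$ of $\prod_{i=1}^n \kappa_i$ is obtained by lining up chosen diagrams $D_i$ of the $\kappa_i$'s (each $\kappa_i$ being $3_1$ or $\mathrm{mir}(3_1)$) end-to-end in a neighborhood of the leg, so that the leg arc of $D_i$ is identified with the head arc of $D_{i+1}$, and the whole diagram sits in a single plane with one common infinite region.

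\textbf{Key steps.} First, recall from the proof of Theorem~\ref{thm:trefoil} the local structure of an $(R_3,R_3)$-coloring of a single trefoil-knotoid factor with the infinite region colored $0$: the relation $2y-x=x$ in $\Z/3\Z$ forces the head and leg arcs (and hence, via condition (iii) working outward along the unbounded region, every region-color that is "visible from infinity") to be $0$, and then exactly one free $R_3$-choice on the "marked arc" determines the rest of the coloring of that factor — giving the three colorings $c_0, c_1, c_2$. The same holds for $\mathrm{mir}(3_1)$. Second, I would set up the induction on $n$. For $n=1$ this is exactly $col_{(R_3,R_3)}(3_1)_{Y_0}=col_{(R_3,R_3)}(\mathrm{mir}(3_1))_{Y_0}=3$, already established. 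For the inductive step, fix a coloring of $D_1 \cdots D_{n-1}$ with infinite region $0$; by the inductive hypothesis there are $3^{n-1}$ of these, and each assigns the color $0$ to the leg arc of $D_{n-1}$. Since that leg arc is glued to the head arc of $D_n$, the coloring of the last factor $D_n$ is a coloring of a trefoil-knotoid diagram with one endpoint arc pre-colored $0$ and the ambient infinite region pre-colored $0$ — and by the single-factor analysis this extends in exactly $3$ ways (the choice of color on $D_n$'s marked arc), with no further region ambiguity because all new regions are adjacent to the already-$0$-colored unbounded region or are pinned by the arc colors. Multiplying, $col_{(R_3,R_3)}(D_1\cdots D_n)_{Y_0} = 3 \cdot 3^{n-1} = 3^n$. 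Finally, invoke the invariance of the restricted shadow coloring number to conclude the count is independent of the chosen diagrams and hence an invariant of $\prod_{i=1}^n \kappa_i$.

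\textbf{Main obstacle.} The only genuinely delicate point is verifying that when the factors are concatenated, the coloring condition (iv) at the \emph{internal} gluing arcs — which were endpoint arcs of the individual factors but are ordinary arcs of the product, except possibly at the one surviving endpoint — is automatically satisfied, and that the recursion decouples cleanly so that each factor really does contribute an independent factor of $3$. Concretely one must check that the color forced on the glued arc (namely $0$) is consistent as the head arc of the next factor, that no region of $D_n$ touches a region of $D_{n-1}$ in a way that over-constrains things, and that the "one free marked-arc choice per factor" picture from Theorem~\ref{thm:trefoil} survives in the presence of the neighboring factors. Since the concatenation happens in a small disk near the leg and leaves the rest of each $D_i$ (including its marked arc and its local region structure) untouched, this reduces to a local diagram check essentially identical to the one already done for $\prod \mathrm{mir}(2_1)$ in the proof of Theorem~\ref{thm:colornumber}, so I expect it to go through without new ideas — but it is the step that needs to be written carefully rather than asserted.
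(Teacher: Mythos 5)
Your proposal is correct and follows essentially the same route as the paper: the paper's proof also views the product diagram as a concatenation of the $n$ single-factor diagrams and observes that, with the infinite region colored $0$, each factor contributes exactly one independent free choice (its marked arc), giving $3^n$ colorings. Your inductive packaging and the explicit check that the glued endpoint arcs are forced to carry color $0$ just make precise what the paper asserts in one line.
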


\begin{figure}[ht]

\begin{overpic}[unit=.434mm,scale=.8]{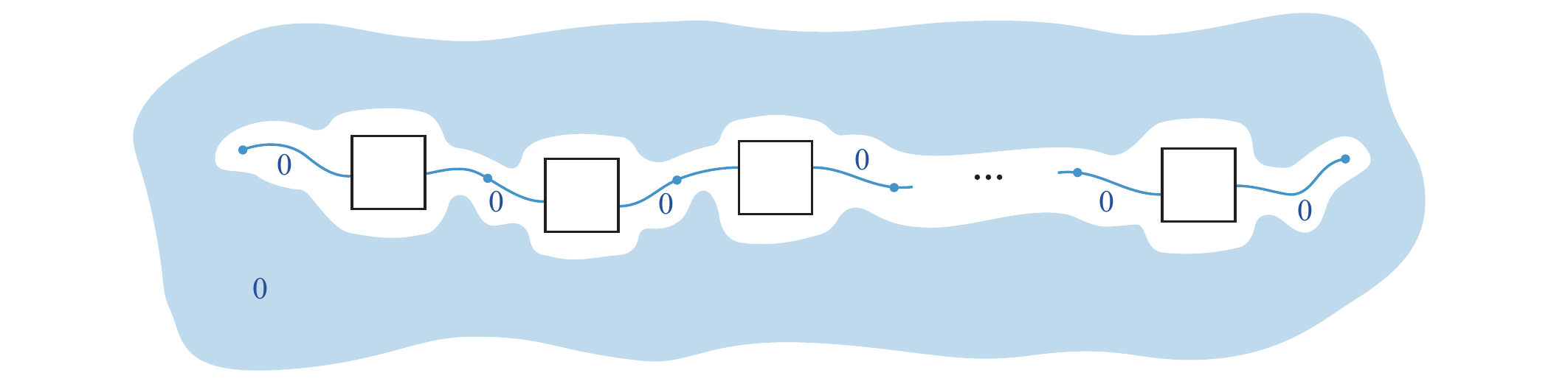}\put(95,52){$\kappa_1$}
\put(144,46){$\kappa_2$}\put(193,51){$\kappa_3$}\put(300,49){$\kappa_n$}

\end{overpic}

\caption{Knotoid product with $3_1$ and mir($3_1$) factors.}
\label{fig:product}
\end{figure}
\begin{proof}

Consider the knotoid diagram of $\prod_{i=1}^n \kappa_i$ in Figure \ref{fig:product}. This diagram is seen as the concatenation of $n$ diagrams from Figure \ref{fig:regioncolored} and is $(R_3,R_3)$-colorable. With $Y_0=\{0\}$, there are $n$ independent choices of arc colors to uniquely determine a shadow coloring, the $n$ marked arcs of each $\kappa_i$ component seen in Figure \ref{fig:regioncolored}.

\end{proof}

\begin{theorem}

The shadow quandle cocycle invariant distinguishes infinitely many knot-type knotoids from their mirrors.
\label{thm:cocycle}
\end{theorem}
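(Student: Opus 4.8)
The plan is to produce an explicit infinite family of knot-type knotoids distinguished from their mirrors by the shadow cocycle invariant $\Phi'_{\theta_3}$ with respect to $(R_3,R_3)$ and $Y_0=\{0\}$, using the knotoid products $\prod_{i=1}^n\kappa_i$ with each $\kappa_i$ equal to $3_1$ or $\mathrm{mir}(3_1)$ as in the preceding corollary. First I would observe that such a product is again a knot-type knotoid: concatenating diagrams with both endpoints in the outer region keeps both endpoints in the outer region, so the family lives in the right class. Next I would compute $\Phi'_{\theta_3}$ of the product by exploiting the fact, already established, that every $(R_3,R_3)$-coloring of $\prod_{i=1}^n\kappa_i$ with the infinite region colored $0$ is determined by $n$ independent choices of the marked-arc colors, one per factor, and that on each factor the coloring restricts to one of the three colorings $c_0,c_1,c_2$ tabulated in Table~\ref{tab:color1}. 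Since the $\theta_3$-weight is additive over crossings and the crossings of the product are exactly the disjoint union of the crossings of the factors, $W_{\theta_3}$ of a product coloring is the sum (in $\mathbb{Z}_3$, written multiplicatively as a product in $\langle u\mid u^3=1\rangle$) of the weights of the factor colorings. Hence $\Phi'_{\theta_3}$ factors as a product over the components in $\mathbb{Z}[\mathbb{Z}_3]$.

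Concretely, from Theorem~\ref{thm:trefoil} each $3_1$-factor contributes the multiset weight generating function $1+2u^2$ and each $\mathrm{mir}(3_1)$-factor contributes $1+2u$, so if $j$ of the $\kappa_i$ equal $3_1$ and $n-j$ equal $\mathrm{mir}(3_1)$ then
\[
\Phi'_{\theta_3}\Bigl(\prod_{i=1}^n\kappa_i\Bigr)_{Y_0}=(1+2u^2)^j(1+2u)^{n-j}\in\mathbb{Z}[\mathbb{Z}_3].
\]
The mirror of $\prod_{i=1}^n\kappa_i$ is $\prod_{i=1}^n\mathrm{mir}(\kappa_i)$ (mirroring reverses each factor and reverses the order, but order is immaterial to the product structure up to the relevant equivalence), whose invariant is obtained by swapping $u\leftrightarrow u^2$, i.e. $(1+2u)^j(1+2u^2)^{n-j}$. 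So I must show these two elements of $\mathbb{Z}[\mathbb{Z}_3]$ differ for infinitely many choices of $(n,j)$ — it suffices to take $j=n$, giving $(1+2u^2)^n$ versus $(1+2u)^n$. Expanding $(1+2u)^n=\sum_{k}\binom{n}{k}2^k u^k$ and collecting exponents mod $3$, one gets an element $a_0+a_1u+a_2u^2$; the mirror has coefficients $a_0+a_2u+a_1u^2$, and these agree iff $a_1=a_2$. A short computation (e.g. evaluating at the character $u\mapsto\zeta$, a primitive cube root of unity: $(1+2\zeta)^n$ versus its complex conjugate $(1+2\bar\zeta)^n$, and noting $1+2\zeta$ has nonzero argument that is not a rational multiple of $\pi$, so $(1+2\zeta)^n$ is non-real for all $n\geq1$) shows $a_1\neq a_2$ for every $n\geq 1$, so all the $n$-fold self-products of $3_1$ are chiral and pairwise these give infinitely many distinct examples.

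The main obstacle is the bookkeeping for the multiplicativity claim: I need to be sure that (a) the set of $(R_3,R_3)$-colorings of the product with infinite region $0$ really is the product of the factor colorings with matching boundary arc colors — this uses that the shared arc between consecutive factors is forced to color $0$ because it meets (a translate of) the head/leg region colored $0$, so the factors decouple cleanly — and (b) that $\mathrm{mir}$ of a product is the product of the mirrors, and that $\Phi'_{\theta_3}$ is genuinely a knotoid invariant in the restricted $Y_0$ sense so that the computed generating functions are invariants, not just diagram-dependent quantities; both of these follow from the invariance theorem for $\Phi'_f(D)_{Y_0}$ and the concatenation description of the knotoid product recalled in Section~\ref{sec2}. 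Everything else is the elementary non-vanishing argument for $(1+2\zeta)^n$ being non-real, which I would dispatch by the argument just sketched rather than a congruence case-check.
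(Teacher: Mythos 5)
Your route is genuinely different from the paper's: the paper exhibits an infinite family of twist knotoids $T_{m+1}$ (with $m$ a multiple of $9$) and checks directly that the twist region contributes trivially to the $\theta_3$-weight, whereas you take iterated products of $3_1$ and exploit multiplicativity of $\Phi'_{\theta_3}$ over the factors. The multiplicativity step is fine: in the product diagram the connecting arcs have the $0$-colored outer region on both sides, so they are forced to be colored $0$, each factor sees exactly the boundary data of the standalone $Y_0=\{0\}$ diagram, the colorings decouple (as the paper's corollary already records), and the crossing colors of each factor agree with Table~\ref{tab:color1}; hence $\Phi'_{\theta_3}\bigl(\prod_{i=1}^n 3_1\bigr)_{Y_0}=(1+2u^2)^n$ and the mirror gives $(1+2u)^n$.

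However, your final non-vanishing argument contains a concrete error. With $\zeta=e^{2\pi i/3}$ one has $1+2\zeta=1+2(-\tfrac12+\tfrac{\sqrt3}{2}i)=i\sqrt3$, whose argument is $\pi/2$ --- very much a rational multiple of $\pi$ --- so $(1+2\zeta)^n=i^n3^{n/2}$ is \emph{real} for every even $n$. Correspondingly $(1+2u)^n=(1+2u^2)^n$ in $\mathbb{Z}[\mathbb{Z}_3]$ for even $n$ (e.g.\ $(1+2u)^2=1+4u+4u^2=(1+2u^2)^2$), so the invariant fails to detect chirality of the even products and your claim that it works ``for every $n\geq1$'' is false. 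The theorem still follows from your construction by restricting to odd $n$, where $(i\sqrt3)^n$ is purely imaginary and nonzero, hence $a_1\neq a_2$; you should also say a word about why the odd products are pairwise distinct knot-type knotoids (the cardinality $3^n$ of the coloring multiset, or the associated connected-sum knots, does this). With those two repairs the proof is complete, and it is arguably leaner than the paper's since it reuses the $3_1$ computation rather than introducing a new family.
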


\begin{figure}[ht]

\begin{overpic}[unit=.434mm,scale=.8]{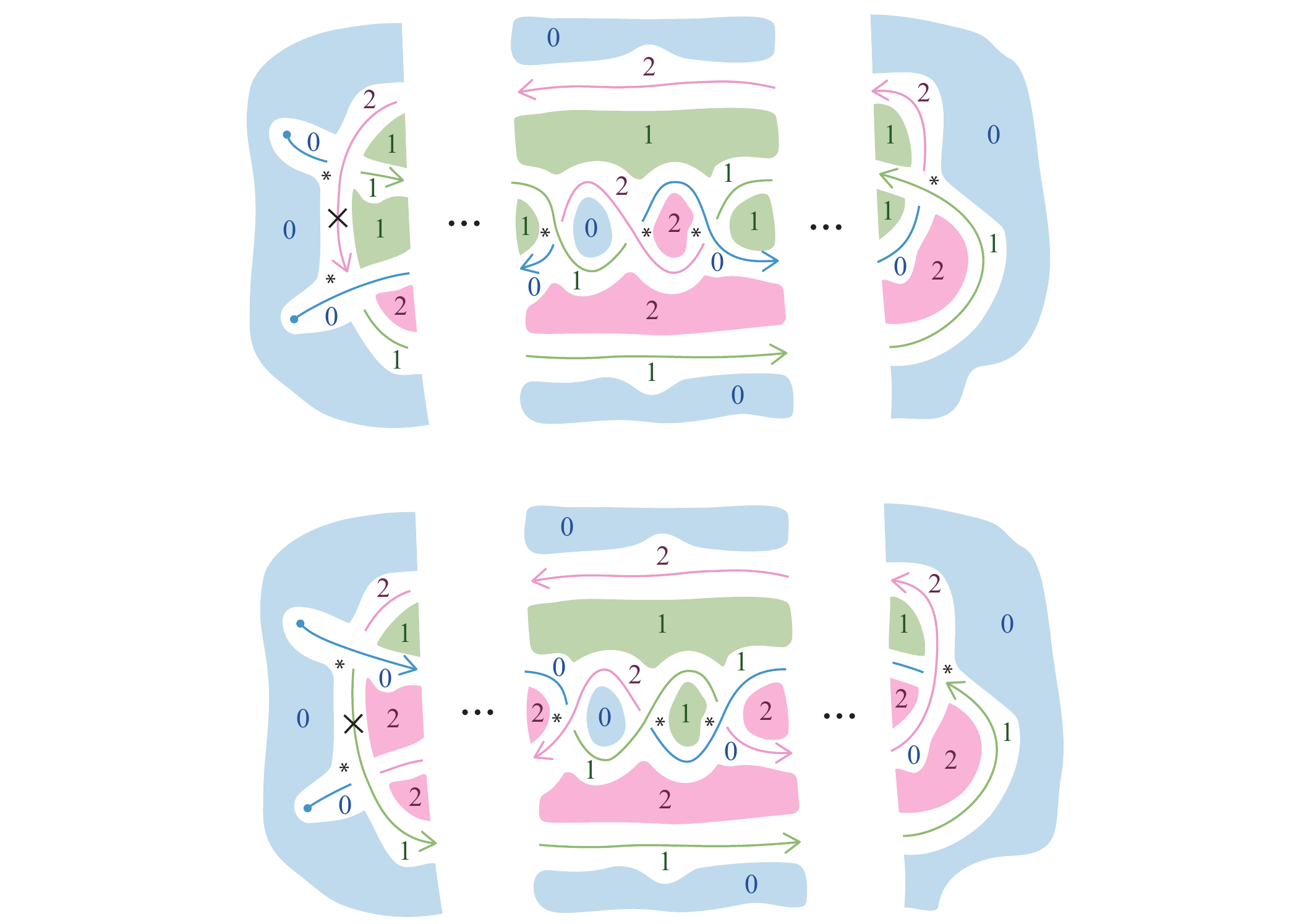}\put(296,158){$T_{3k+1}$}
\put(294,8){mir($T_{3k+1}$)}

\end{overpic}

\caption{The colorings $c_2$ of $T_{3k+1}$ and $c_1$ of mir($T_{3k+1}$).}
\label{fig:twist}
\end{figure}

\begin{proof}

For a non-negative integer $k$, consider the knotoid diagrams $T_{3k+1}$ and mir($T_{3k+1}$) shown in Figure \ref{fig:twist}. These diagrams represent knot-type knotoids and are $(R_3,R_3)$-colorable. Their associated knots are the $3k+1$-twist knot and its mirror. With $Y_0=\{0\}$, the head and leg arcs must be colored 0 and a choice of color for the marked arc uniquely determines a $(R_3,R_3)$-coloring. Therefore, $col_{(R_3,R_3)}(T_{3k+1})_{Y_0}=col_{(R_3,R_3)}(\text{mir}(T_{3k+1}))_{Y_0}=3$. The two non-trivial colorings differ by the permutation $0\mapsto 0$, $1\mapsto2$, $2\mapsto1$, one of which is shown in Figure \ref{fig:twist}. Let $c_i$ denote the coloring determined by coloring the marked arc $i$ in the diagrams of Figure \ref{fig:twist}.

These diagrams are drawn to show the repetition of the colorings every 3 twists. Each diagram will have $k$ copies of the illustrated middle twisted region. Recall that $\theta_3:R_3\times R_3 \times R_3 \to \mathbb{Z}_3=\langle u |u^3=1\rangle$ given by  \[ \theta_3(x,y,z)=u^{(x-y)(y-z)z(x+z)}\]  generates $H_Q^3(R_3,\mathbb{Z}_3)$. 
Suppose that $m\geq0$ is a multiple of 9. Then, the crossing colors of the twisted region in $T_{m+1}$ and mir($T_{m+1}$) come in multiples of 3 and do not contribute to the $\theta_3$-weight of the diagram. Table \ref{tab:color2} tabulates the crossing colors of the 3 crossings not in the $m$-twist region and the $\theta_3$-weights of each coloring. This table shows that \[ \Phi'_{\theta_3}(T_{m+1})_{Y_0}= 1 + 2u^2\in\mathbb{Z}[\mathbb{Z}_3],\] and \[ \Phi'_{\theta_3}(\text{mir($T_{m+1}$)})_{Y_0}=1+2u\in\mathbb{Z}[\mathbb{Z}_3].\]

\begin{table}[ht]
\setlength{\tabcolsep}{10pt} 
\renewcommand{\arraystretch}{1.5}

\begin{tabular}{|c||c |c |}

\hline 
$c_i$ & Reduced Crossing Color Sum of $T_{m+1}$  & $W_{\theta_3}(T_{m+1},c_i)$     \\ \hline

$c_0$ &$(0,0,0)+(0,0,0)-(0,0,0)$& 1

 \\ \hline

$c_1$ & $(0,0,1)+(0,1,0)-(0,1,2)$& $ u^2$

\\\hline

$c_2$ &   $(0,0,2)+(0,2,0)-(0,2,1)$& $u^2$

\\\hline

\end{tabular}

\vspace{5mm}

\begin{tabular}{|c||c |c |c |c| c| c |c |c |c| c| c| c| c| c| c| c| c| c| c| c| c| }

\hline 
$c_i$ & Reduced Crossing Color Sum of mir($T_{m+1}$) & $W_{\theta_3}(\text{mir}(T_{m+1}),c_i)$    \\ \hline

$c_0$ &$-(0,0,0)-(0,0,0)+(0,0,0)$& 1

 \\ \hline

$c_1$ & $-(0,1,0)-(0,0,1)+(0,1,2)$& $u$

\\\hline

$c_2$ &   $-(0,2,0)-(0,0,2)+(0,2,1)$& $u$

\\\hline

\end{tabular}

\vspace{5mm}

\caption{$(R_3,R_3)$-colorings of $T_{m+1}$ and mir($T_{m+1}$) with $Y_0=\{0\}$.}\label{tab:color2}
\end{table}

\end{proof}

For non-negative integers $s$ and $t$, let $A_{s,t}$ denote the direct sum of $s$ copies of $\mathbb{Z}_2$ and $t$ copies of $\mathbb{Z}$, $A_{s,t}=(\mathbb{Z}_2)^s\oplus(\mathbb{Z})^t$. Every element of $A_{s,t}$ is of the form $(\alpha_1 \oplus \cdots \oplus \alpha_s) \oplus ( \beta_1\oplus \cdots \oplus \beta_t)$, where $\alpha_i$ is an entry of the $i$th copy of $\mathbb{Z}_2$ and $\beta_j$ is an entry of the $j$th copy of $\mathbb{Z}$. Let $p_i$ and $q_j$ be the elements of $A_{s,t}$ whose entries are all zeros except $\alpha_i=1$ and $\beta_j=1$.

\begin{lemma}[Kamda and Oshiro '09 \cite{oshiro}]

Let $X$ be a quandle, and let $f: \mathbb{Z}(X^3)\to A_{s,t}$ be a 3-cocycle of $X$ such that for any generator $(a,b,c)\in X^3$ of $\mathbb{Z}(X^3)$ it holds that \[ f(a,b,c)\in\{0,p_i,\pm q_j \}.\]
If the $f$-weight $W_f(D,c)$ of a multi-linkoid diagram $D$ with an $(X,X)$-coloring is equal to $(\alpha_1 \oplus \cdots \oplus \alpha_s) \oplus ( \beta_1\oplus \cdots \oplus \beta_t)$, then for the multi-linkoid $\mu$ representing $D$ \[c(\mu)\geq \sum_{i=1}^s\alpha_i + \sum_{i=1}^t|\beta_j|,\] where the sum is taken in $\mathbb{Z}$ by regarding $\alpha_k=0$ or $1$ as an element of $\mathbb{Z}$.

 \label{lem} \end{lemma}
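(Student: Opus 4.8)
The plan is to count, for a fixed $f$-weight value $w = (\alpha_1 \oplus \cdots \oplus \alpha_s) \oplus (\beta_1 \oplus \cdots \oplus \beta_t)$, how many crossings of $D$ are forced to contribute nontrivially, and to argue that each contributing crossing can be ``resolved'' to lower the crossing count. First I would fix an $(X,X)$-colored diagram $D$ of $\mu$ with $W_f(D,c) = w$, and split the local weights into their $\mathbb{Z}_2$-components (indexed by $i = 1, \dots, s$) and their $\mathbb{Z}$-components (indexed by $j = 1, \dots, t$). By the hypothesis $f(a,b,c) \in \{0, p_i, \pm q_j\}$, each crossing contributes either $0$, or exactly one $p_i$, or exactly one $\pm q_j$; in particular the $i$-th $\mathbb{Z}_2$-coordinate of $W_f(D,c)$ is the mod-$2$ count of crossings whose color $(y,x_1,x_2)$ has $f$-value $p_i$, and the $j$-th $\mathbb{Z}$-coordinate is a signed count of crossings with $f$-value $\pm q_j$.

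Next I would observe that $\alpha_i = 1$ forces at least one crossing $v$ with $f(\text{color of }v) = p_i$ to be present in $D$ (an odd number of them, hence $\ge 1$), and similarly $|\beta_j|$ forces at least $|\beta_j|$ crossings with $f$-value $\pm q_j$ (the signed count being $\beta_j$, the unsigned count is $\ge |\beta_j|$). Since the $p_i$ for distinct $i$ and the $\pm q_j$ for distinct $j$ are linearly independent in $A_{s,t}$, these crossings are distinct for distinct indices, so $D$ contains at least $\sum_{i=1}^s \alpha_i + \sum_{j=1}^t |\beta_j|$ crossings whose color has nonzero $f$-value. The crossing number $c(\mu)$ is the minimum number of crossings over all diagrams of $\mu$, and the quantity on the right is the same for every diagram of $\mu$ because $W_f(D,c)$ is fixed once $w$ is (here I would invoke the earlier fact, carried over from the knot/knotoid cocycle invariance, that the multiset of $f$-weights is a multi-linkoid invariant, so $w$ occurring for one diagram means we may compute with any diagram realizing the minimal crossing number and the corresponding extended coloring). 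Therefore $c(\mu) \ge \#\{v : f(\text{color of }v) \ne 0\} \ge \sum_{i=1}^s \alpha_i + \sum_{j=1}^t |\beta_j|$.

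The subtle point, and the step I expect to be the main obstacle, is the passage from ``$D$ has weight $w$'' to ``the minimal-crossing diagram also has weight $w$.'' One must know that the set of realized weights does not depend on the diagram: given the minimal diagram $D_{\min}$ of $\mu$, the coloring $c$ of $D$ transports (via the invariance bijection from the cocycle-invariant theorem) to a coloring $c_{\min}$ of $D_{\min}$ with $W_f(D_{\min}, c_{\min}) = w$, and then the counting argument above applies verbatim to $D_{\min}$, giving $c(\mu) = (\text{number of crossings of } D_{\min}) \ge \sum \alpha_i + \sum |\beta_j|$. I would spell this out carefully, since it is exactly the bridge between the algebraic weight and the geometric crossing number. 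Everything else is bookkeeping with the coordinate decomposition of $A_{s,t}$ and the parity/signed-count observations; I would present those briefly and cite \cite{oshiro} for the original surface-link version, noting that the argument is identical once shadow colorings of multi-linkoid diagrams and the invariance of $\Phi_f$ are in hand.
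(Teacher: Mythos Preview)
Your proposal is correct and matches the paper's approach: the paper does not give a full proof but only a brief sketch (citing Kamada--Oshiro for the surface-link version), and that sketch is exactly your counting argument---transport the coloring to a minimal-crossing diagram via the invariance of the $f$-weight, then observe that each crossing contributes at most one of the generators $p_i$ or $\pm q_j$, so the coordinate totals $\alpha_i$ and $|\beta_j|$ each force the stated number of distinct crossings. Your explicit attention to the coloring-transport step is appropriate, as the paper passes over it in one phrase (``since the weight of a quandle 3-cocycle is an invariant''); the stray remark about ``resolving'' crossings in your opening line is unnecessary and can be dropped.
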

 
 The original statement and proof of Kamada and Oshiro's Lemma \ref{lem} is phrased in terms of triple points in broken sheet diagrams and the triple point number of surface-links  \cite{oshiro}. Since a quandle 3-cocycle of $X$ is a quandle 2-cocycle of $(X,X)$ and the $f$-weight of knotoids is an invariant like the $f$-weight of surface-links, Lemma \ref{lem} is a valid reinterpretation of their result in the context of shadow quandle cocycles and knotoids.

Consider a multi-linkoid diagram realizing the crossing number of the multi-linkoid that it represents. If a 3-cocycle has $\mathbb{Z}$ coefficients and only takes the values $1, -1$ or 0, then the absolute value of the cocycle's weight cannot be greater than the number of crossings in the diagram. Since the weight of a quandle 3-cocycle is an invariant, the crossing number bounds the weight of the cocycle. This is the principle of the lemma's proof.

\begin{theorem}

The shadow quandle cocycle invariant determines the crossing numbers of infinitely many multi-linkoids.
\end{theorem}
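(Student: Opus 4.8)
The plan is to apply Lemma~\ref{lem} with the quandle $P_3$ and the quandle $3$-cocycle $\phi\colon\mathbb{Z}(P_3^3)\to\mathbb{Z}_2\oplus\mathbb{Z}=A_{1,1}$ of Example~\ref{ex:p3}, regarded as a quandle $2$-cocycle of $(P_3,P_3)$; its values lie in $\{0,p_1,\pm q_1\}$, exactly the hypothesis of the lemma. For each $n$ in an infinite index set I would construct a multi-linkoid $\mu_n$ with a diagram $D_n$ having $N_n+1$ crossings, together with a $(P_3,P_3)$-colouring $c_n$ satisfying $W_\phi(D_n,c_n)=0\oplus N_n$, where $N_n$ is strictly increasing. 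Lemma~\ref{lem} then gives $c(\mu_n)\ge N_n$, and I would sharpen this to $c(\mu_n)\ge N_n+1$ as follows. The shadow quandle cocycle invariant $\Phi_\phi'$ records the multiset of weights $W_\phi(D,c)$, which is the same for every diagram of $\mu_n$; so a crossing-minimal diagram of $\mu_n$ also carries a colouring whose $\phi$-weight has component sum $N_n\ge 2$. Such a colouring cannot use the colour $0$ on any region, because over $P_3$ a region coloured $0$ is adjacent only to regions coloured $0$, and a colouring in which all regions are coloured $0$ has $\phi$-weight of component sum at most $1$. In a colouring with no region coloured $0$, the crossings with nonzero local $\phi$-weight are precisely those of the form ``a $\{1,2\}$-coloured arc over a $0$-coloured arc''; but not every crossing can be of this form, for then $\mu_n$ would be a split trivial multi-linkoid with vanishing $\phi$-weight. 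Hence at least one crossing is ``wasted'', and since the contributing crossings each add $\pm q_1$ and sum to $\pm N_n$ in $\mathbb{Z}$, a minimal diagram has at least $N_n+1$ crossings. Combined with the diagram $D_n$ on $N_n+1$ crossings this forces $c(\mu_n)=N_n+1$, and distinctness of the $N_n$ makes the $\mu_n$ pairwise inequivalent, so $\Phi_\phi'$ has determined the crossing numbers of infinitely many multi-linkoids.

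For the family itself I would take $\mu_n$ to have one circle component $A$, coloured $0$, and one linkoid (arc) component $B$ whose arcs are coloured in $\{1,2\}$, with $B$ running over $A$ at $N_n$ crossings and under $A$ at exactly one clasping crossing (an arc component is needed here, since a circle can pass under $A$ only an even number of times). The colouring $c_n$ gives the region outside $A$ the colour $1$ and the region inside $A$ the colour $2$, so that the region condition along every arc of $B$ reduces to equality of its two flanking region colours, and this is preserved across each over-crossing of $B$ on $A$ because the two flanking regions switch between $1$ and $2$ simultaneously as $B$ crosses the curve $A$. By the formula for $\phi$, each over-crossing of $B$ on $A$ has colour $(a,0,c)$ with $a,c\in\{1,2\}$ and so contributes $\pm q_1$, while the clasping crossing has colour $(a,b,0)$ with $a,b\in\{1,2\}$ and contributes $0$; that clasping crossing is precisely what keeps $\mu_n$ from being split, so that $c(\mu_n)>0$. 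Arranging the weaving of $B$ around $A$ so that, for one fixed choice of orientations, all $N_n$ over-crossings contribute the same value $q_1$, one obtains $W_\phi(D_n,c_n)=0\oplus N_n$.

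The main obstacle is exactly this last step: one must exhibit, uniformly in $n$, a weaving of $B$ around $A$ and a choice of orientations of the two components for which the specified region and the sign at every one of the $N_n$ over-crossings combine to give $q_1$, with no cancellation in the $\mathbb{Z}$-summand of the weight. Controlling this sign bookkeeping for arbitrarily many crossings at once, rather than for a bounded number, is the delicate part of the argument; once it is arranged, the lower bound from Lemma~\ref{lem} together with the ``wasted crossing'' refinement pins down $c(\mu_n)$ exactly, with no separate minimality argument required.
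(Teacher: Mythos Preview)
Your refinement step is wrong, and the gap is exactly at the sentence ``for then $\mu_n$ would be a split trivial multi-linkoid with vanishing $\phi$-weight.'' In a multi-linkoid the endpoints of an arc component cannot be pulled across a strand, so an arc whose head lies inside a circle and whose leg lies outside is \emph{not} split even if every crossing is of the form ``$\{1,2\}$-coloured arc over $0$-coloured arc.'' Concretely, take one circle coloured $0$ and a single trivial arc coloured $2$ passing over it once: all crossings are of your forbidden type, the multi-linkoid is non-split, and the $\phi$-weight is $0\oplus 1\neq 0$. So from Lemma~\ref{lem} you only obtain $N_n\le c(\mu_n)\le N_n+1$, and the shadow cocycle invariant has not determined $c(\mu_n)$.

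The paper avoids this entirely by making the lower bound match the upper bound on the nose. Instead of one arc clasped with the circle, it uses $n$ \emph{separate} trivial knotoid components, each passing once over a single circle component; the resulting diagram $M_n$ has exactly $n$ crossings. With the circle coloured $0$, every knotoid coloured $2$, and the two regions coloured $1$ and $2$, each crossing is positive with colour $(1,0,2)$, so $W_\phi(M_n,c)=0\oplus n$. Lemma~\ref{lem} gives $c(M_n)\ge n$, and the diagram gives $c(M_n)\le n$, hence $c(M_n)=n$ with no refinement needed. Your clasping crossing and the accompanying ``wasted crossing'' argument are unnecessary complications; the same idea works (and becomes correct) once you allow several arc components and drop the under-crossing, so that the diagram realises the weight exactly.
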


\begin{proof}

Take $n$ trivial knotoids diagrams. Generically immerse one copy of $S^1$ that runs under each knotoid diagram once, creating positive crossings, then simply loops back. See Figure \ref{fig:infinite}. Let $M_n$ denote these multi-linkoid diagrams. These diagrams can be $(P_3,P_3)$-colored by coloring all arcs of the $S^1$ component 0, all knotoid diagrams 2, the region adjacent to the heads 2, and the region adjacent to the legs 1. Let $c$ denote this shadow coloring and $\phi$ the 3-cocycle of $P_3$ defined in Example \ref{ex:p3}. There are $n$ positive crossings each colored $(1,0,2)$. Thus, \[ W_\phi(M_n,c)=0\oplus n\in\mathbb{Z}_2\oplus\mathbb{Z}.\]
 Lemma \ref{lem} implies that the crossing number of $M_n$ is $n$.
\end{proof}

\begin{figure}[ht]

\begin{overpic}[unit=.434mm,scale=.8]{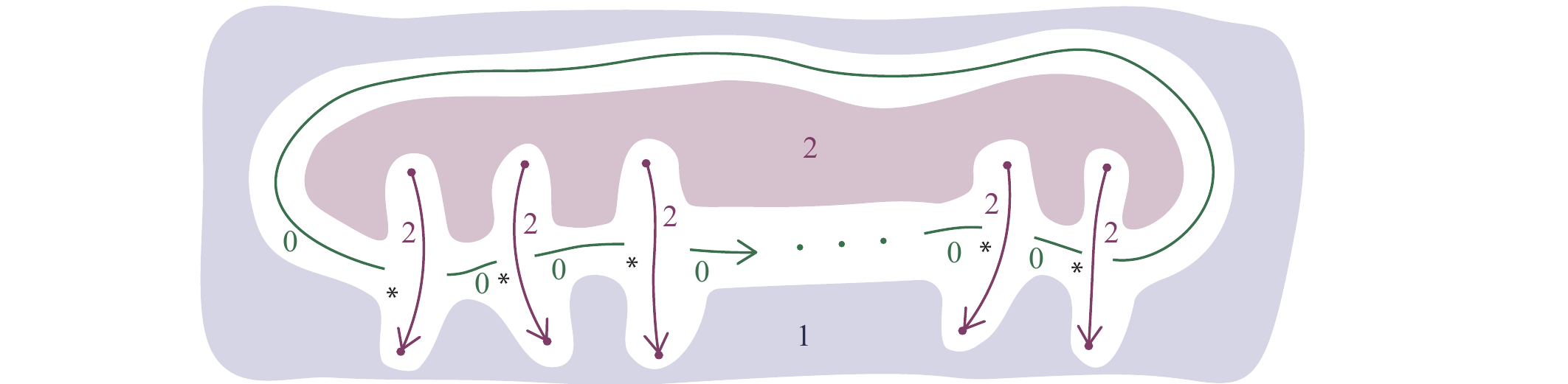}

\end{overpic}

\label{fig:infinite}

\caption{$(P_3,P_3)$-coloring of the multi-linkoids $M_n$.}
\end{figure}

   \bibliographystyle{amsplain}
            \bibliography{railarc}

@book{nosaka2017quandles,
	author = {Nosaka, T.},
	date-added = {2022-07-04 21:18:54 -0700},
	date-modified = {2022-07-04 21:19:15 -0700},
	publisher = {Springer Singapore},
	title = {Quandles and Topological Pairs: Symmetry, Knots, and Cohomology},
	year = {2017},
	bdsk-url-1 = {https://books.google.com/books?id=qus%5C_DwAAQBAJ}}

@article{Matveev_1984,
	abstract = {A sequence of new knot invariants is constructed by using the relationship between the theory of distributive groupoids and knot theory. Bibliography: 3 titles.},
	author = {Matveev, S.V.},
	date-added = {2022-07-04 21:08:30 -0700},
	date-modified = {2022-07-04 21:10:16 -0700},
	journal = {Mat. Sb. (N.S.)},
	pages = {73-83},
	title = {Distributive groupoids in knot theory},
	volume = {47},
	year = 1984,
	bdsk-url-1 = {https://doi.org/10.1070/sm1984v047n01abeh002630}}

@article{rourke,
	author = {Rourke, C. and Sanderson, B.},
	date-added = {2022-07-03 21:41:51 -0700},
	date-modified = {2022-07-03 21:48:16 -0700},
	journal = {Preprint},
	title = {There Are Two 2-Twist-Spun Trefoils},
	year = {2000}}

@article{joyce,
	author = {Joyce, D.},
	date-added = {2022-07-03 19:10:25 -0700},
	date-modified = {2022-07-04 21:11:39 -0700},
	journal = {Journal of Pure and Applied Algebra},
	pages = {37-65},
	title = {A classifying invariant of knots, the knot quandle},
	volume = {23},
	year = {1982}}

@article{shadowquandle,
	abstract = { Geometric representations of cycles in quandle homology theory are given in terms of colored knot diagrams. Abstract knot diagrams are generalized to diagrams with exceptional points which, when colored, correspond to degenerate cycles. Bounding chains are realized, and used to obtain equivalence moves for homologous cycles. The methods are applied to prove that boundary homomorphisms in a homology exact sequence vanish. },
	author = {Carter, J.S. and Kamada, S. and Saito, M.},
	date-added = {2022-07-02 18:17:00 -0700},
	date-modified = {2022-07-02 18:18:15 -0700},
	journal = {Journal of Knot Theory and Its Ramifications},
	number = {03},
	pages = {345-386},
	title = {Geometric Interpretations of Quandle Homology},
	volume = {10},
	year = {2001},
	bdsk-url-1 = {https://doi.org/10.1142/S0218216501000901}}

@book{elhamdadi2015quandles,
	author = {Elhamdadi, M. and Nelson, S.},
	date-added = {2022-06-24 22:55:52 -0700},
	date-modified = {2022-06-24 22:55:52 -0700},
	isbn = {9781470422134},
	lccn = {2015012551},
	publisher = {American Mathematical Society},
	series = {Student Mathematical Library},
	title = {Quandles},
	url = {https://books.google.com/books?id=CBdzCgAAQBAJ},
	year = {2015},
	bdsk-url-1 = {https://books.google.com/books?id=CBdzCgAAQBAJ}}

@article{mochi,
	abstract = { We compute the third cohomology group of a dihedral quandle of odd order with finite field coefficient, generalizing our previous work. In particular, we obtain many new examples of non-trivial 3-cocycles. We check non-triviality of the associated invariant for knots. },
	author = {Mochizuki, T.},
	date-added = {2022-06-24 00:15:31 -0700},
	date-modified = {2022-06-24 00:16:25 -0700},
	journal = {Journal of Knot Theory and Its Ramifications},
	number = {07},
	pages = {1041-1057},
	title = {The third cohomology groups of dihedral quandles},
	volume = {20},
	year = {2011},
	bdsk-url-1 = {https://doi.org/10.1142/S0218216511009133}}

@article{Carter1999ComputationsOQ,
	author = {Carter, J.S. and Jelsovsky, D. and Kamada, S. and Saito, M.},
	date-added = {2022-06-23 00:55:54 -0700},
	date-modified = {2022-06-23 00:55:54 -0700},
	journal = {Advances in Mathematics},
	pages = {36-94},
	title = {Computations of Quandle Cocycle Invariants of Knotted Curves and Surfaces},
	volume = {157},
	year = {1999}}

@book{CKS,
	author = {Carter, J.S. and Kamada, S. and Saito, M.},
	date-added = {2022-06-23 00:55:54 -0700},
	date-modified = {2022-06-23 00:55:54 -0700},
	publisher = {Springer-Verlag Berlin Heidelberg},
	title = {Surfaces in 4-Space},
	year = {2004},
	bdsk-file-1 = {YnBsaXN0MDDSAQIDBFxyZWxhdGl2ZVBhdGhZYWxpYXNEYXRhXxBJLi4vLi4vRG93bmxvYWRzL2JyaWRnZS10cmlzZWN0aW9ucy1vZi1rbm90dGVkLXN1cmZhY2VzLWluLTQtbWFuaWZvbGRzLmJpYk8RAfQAAAAAAfQAAgAADE1hY2ludG9zaCBIRAAAAAAAAAAAAAAAAAAAAAAAAABCRAAB/////x9icmlkZ2UtdHJpc2VjdGlvbnMjRkZGRkZGRkYuYmliAAAAAAAAAAAAAAAAAAAAAAAAAAAAAAAAAAAAAAAAAAD/////AAAAAAAAAAAAAAAAAAIAAgAACiBjdQAAAAAAAAAAAAAAAAAJRG93bmxvYWRzAAACAFIvOlVzZXJzOmV1Y2xpZDpEb3dubG9hZHM6YnJpZGdlLXRyaXNlY3Rpb25zLW9mLWtub3R0ZWQtc3VyZmFjZXMtaW4tNC1tYW5pZm9sZHMuYmliAA4AdAA5AGIAcgBpAGQAZwBlAC0AdAByAGkAcwBlAGMAdABpAG8AbgBzAC0AbwBmAC0AawBuAG8AdAB0AGUAZAAtAHMAdQByAGYAYQBjAGUAcwAtAGkAbgAtADQALQBtAGEAbgBpAGYAbwBsAGQAcwAuAGIAaQBiAA8AGgAMAE0AYQBjAGkAbgB0AG8AcwBoACAASABEABIAUFVzZXJzL2V1Y2xpZC9Eb3dubG9hZHMvYnJpZGdlLXRyaXNlY3Rpb25zLW9mLWtub3R0ZWQtc3VyZmFjZXMtaW4tNC1tYW5pZm9sZHMuYmliABMAAS8AABUAAgAN//8AAAAIAA0AGgAkAHAAAAAAAAACAQAAAAAAAAAFAAAAAAAAAAAAAAAAAAACaA==}}

@article{cocycle,
	author = {Carter, J.S. and Jelsovsky, D. and Kamada, S. and Langford, L. and Saito, M.},
	date-added = {2022-06-23 00:55:54 -0700},
	date-modified = {2022-06-23 00:55:54 -0700},
	journal = {Transactions of the American Mathematical Society},
	number = {10},
	pages = {3947--3989},
	title = {Quandle cohomology and state-sum invariants of knotted curves and surfaces},
	volume = {355},
	year = {2003}}

@book{kamada2017surface,
	author = {Kamada, S.},
	date-added = {2022-06-23 00:55:54 -0700},
	date-modified = {2022-06-23 00:55:54 -0700},
	isbn = {9789811040917},
	lccn = {2017933445},
	publisher = {Springer Singapore},
	series = {Springer Monographs in Mathematics},
	title = {Surface-Knots in 4-Space},
	url = {https://books.google.com/books?id=I7aMDgAAQBAJ},
	year = {2017},
	bdsk-url-1 = {https://books.google.com/books?id=I7aMDgAAQBAJ}}

@article{oshiro,
	author = {Oshiro, K.},
	date-added = {2022-06-23 00:55:54 -0700},
	date-modified = {2022-06-23 00:55:54 -0700},
	journal = {Algebraic and Geometric Topology},
	number = {2},
	pages = {853 -- 865},
	read = {0},
	title = {Triple point numbers of surface-links and symmetric quandle cocycle invariants},
	volume = {10},
	year = {2010},
	bdsk-file-1 = {YnBsaXN0MDDSAQIDBFxyZWxhdGl2ZVBhdGhZYWxpYXNEYXRhXxA6Li4vLi4vRG93bmxvYWRzL0tub3R0ZWRfU3VyZmFjZXNfYW5kX1RoZWlyX0RpYWdyYW1zLmJpYnRleE8RAboAAAAAAboAAgAADE1hY2ludG9zaCBIRAAAAAAAAAAAAAAAAAAAAAAAAABCRAAB/////x9Lbm90dGVkX1N1cmZhY2VzX2FuZF9UI0ZGRkZGRkZGAAAAAAAAAAAAAAAAAAAAAAAAAAAAAAAAAAAAAAAAAAD/////AAAAAAAAAAAAAAAAAAIAAgAACiBjdQAAAAAAAAAAAAAAAAAJRG93bmxvYWRzAAACAEMvOlVzZXJzOmV1Y2xpZDpEb3dubG9hZHM6S25vdHRlZF9TdXJmYWNlc19hbmRfVGhlaXJfRGlhZ3JhbXMuYmlidGV4AAAOAFYAKgBLAG4AbwB0AHQAZQBkAF8AUwB1AHIAZgBhAGMAZQBzAF8AYQBuAGQAXwBUAGgAZQBpAHIAXwBEAGkAYQBnAHIAYQBtAHMALgBiAGkAYgB0AGUAeAAPABoADABNAGEAYwBpAG4AdABvAHMAaAAgAEgARAASAEFVc2Vycy9ldWNsaWQvRG93bmxvYWRzL0tub3R0ZWRfU3VyZmFjZXNfYW5kX1RoZWlyX0RpYWdyYW1zLmJpYnRleAAAEwABLwAAFQACAA3//wAAAAgADQAaACQAYQAAAAAAAAIBAAAAAAAAAAUAAAAAAAAAAAAAAAAAAAIf},
	bdsk-url-1 = {https://doi.org/10.2140/agt.2010.10.853}}

@article{satoh1,
	abstract = {The triple point number of an embedded surface in 4-space is the minimal number of the triple points on all the projection images into 3-space. We show that the 2-twist-spun trefoil has the triple point number four.},
	author = {Satoh, S. and Shima, A.},
	date-added = {2022-06-23 00:55:54 -0700},
	date-modified = {2022-06-23 00:55:54 -0700},
	journal = {Transactions of the American Mathematical Society},
	number = {3},
	pages = {1007-1024},
	title = {The 2-Twist-Spun Trefoil Has the Triple Point Number Four},
	volume = {356},
	year = {2004},
	bdsk-url-1 = {http://www.jstor.org/stable/1195005}}

@article{polym,
	abstract = {In this paper we introduce a method that offers a detailed overview of the entanglement of an open protein chain. Further, we present a purely topological model for classifying open protein chains by also taking into account any bridge involving the backbone. To this end, we implemented the concepts of planar knotoids and bonded knotoids. We show that the planar knotoids technique provides more refined information regarding the knottedness of a protein when compared to established methods in the literature. Moreover, we demonstrate that our topological model for bonded proteins is robust enough to distinguish all types of lassos in proteins.},
	author = {Goundaroulis, D. and G{\"u}g{\"u}mc{\"u}, N. and Lambropoulou, S. and Dorier, J. and Stasiak, A. and Kauffman, L.H.},
	date-added = {2022-06-11 21:37:37 -0700},
	date-modified = {2022-06-20 23:08:40 -0700},
	journal = {Polymers},
	number = {9},
	title = {Topological Models for Open-Knotted Protein Chains Using the Concepts of Knotoids and Bonded Knotoids},
	volume = {9},
	year = {2017},
	bdsk-url-1 = {https://www.mdpi.com/2073-4360/9/9/444},
	bdsk-url-2 = {https://doi.org/10.3390/polym9090444}}

@article{gugumcu2021biquandle,
	author = {G{\"u}g{\"u}mc{\"u}, Neslihan and Nelson, Sam and Oyamaguchi, Natsumi},
	date-added = {2022-06-07 21:58:04 -0700},
	date-modified = {2022-06-07 21:58:04 -0700},
	journal = {Journal of Knot Theory and Its Ramifications},
	number = {09},
	pages = {2150064},
	publisher = {World Scientific},
	title = {Biquandle brackets and knotoids},
	volume = {30},
	year = {2021}}

@article{manouras2021finite,
	author = {Manouras, Manousos and Lambropoulou, Sofia and Kauffman, Louis H},
	date-added = {2022-06-07 21:56:11 -0700},
	date-modified = {2022-06-07 21:56:11 -0700},
	journal = {European Journal of Combinatorics},
	pages = {103402},
	publisher = {Elsevier},
	title = {Finite type invariants for knotoids},
	volume = {98},
	year = {2021}}

@article{barbensi2021f,
	author = {Barbensi, A. and Goundaroulis, D.},
	date-added = {2022-06-07 21:55:31 -0700},
	date-modified = {2022-06-20 23:00:36 -0700},
	journal = {Proceedings of the Royal Society A},
	number = {2246},
	title = {f-distance of knotoids and protein structure},
	volume = {477},
	year = {2021}}

@article{gugumcu2019biquandle,
	author = {G{\"u}g{\"u}mc{\"u}, Neslihan and Nelson, Sam},
	date-added = {2022-06-07 21:54:32 -0700},
	date-modified = {2022-06-07 21:54:32 -0700},
	journal = {Journal of Knot Theory and its Ramifications},
	number = {04},
	pages = {1950029},
	publisher = {World Scientific},
	title = {Biquandle coloring invariants of knotoids},
	volume = {28},
	year = {2019},
	bdsk-file-1 = {YnBsaXN0MDDSAQIDBFxyZWxhdGl2ZVBhdGhZYWxpYXNEYXRhXxAeLi4vLi4vRG93bmxvYWRzL2NpdGF0aW9uLTguYmliTxEBSgAAAAABSgACAAAMTWFjaW50b3NoIEhEAAAAAAAAAAAAAAAAAAAAAAAAAEJEAAH/////DmNpdGF0aW9uLTguYmliAAAAAAAAAAAAAAAAAAAAAAAAAAAAAAAAAAAAAAAAAAAAAAAAAAAAAAAAAAAAAAAAAP////8AAAAAAAAAAAAAAAAAAgACAAAKIGN1AAAAAAAAAAAAAAAAAAlEb3dubG9hZHMAAAIAJy86VXNlcnM6ZXVjbGlkOkRvd25sb2FkczpjaXRhdGlvbi04LmJpYgAADgAeAA4AYwBpAHQAYQB0AGkAbwBuAC0AOAAuAGIAaQBiAA8AGgAMAE0AYQBjAGkAbgB0AG8AcwBoACAASABEABIAJVVzZXJzL2V1Y2xpZC9Eb3dubG9hZHMvY2l0YXRpb24tOC5iaWIAABMAAS8AABUAAgAN//8AAAAIAA0AGgAkAEUAAAAAAAACAQAAAAAAAAAFAAAAAAAAAAAAAAAAAAABkw==}}

@article{goundaroulis2019systematic,
	author = {Goundaroulis, D. and Dorier, J. and Stasiak, A.},
	date-added = {2022-06-07 21:54:08 -0700},
	date-modified = {2022-06-20 23:06:35 -0700},
	journal = {arXiv preprint arXiv:1902.07277},
	title = {A systematic classification of knotoids on the plane and on the sphere},
	year = {2019},
	bdsk-file-1 = {YnBsaXN0MDDSAQIDBFxyZWxhdGl2ZVBhdGhZYWxpYXNEYXRhXxAjLi4vLi4vRG93bmxvYWRzL3dzcGNfamt0cjEwXzM0NS5iaWJPEQFcAAAAAAFcAAIAAAxNYWNpbnRvc2ggSEQAAAAAAAAAAAAAAAAAAAAAAAAAQkQAAf////8Td3NwY19qa3RyMTBfMzQ1LmJpYgAAAAAAAAAAAAAAAAAAAAAAAAAAAAAAAAAAAAAAAAAAAAAAAAAAAAAAAAAA/////wAAAAAAAAAAAAAAAAACAAIAAAogY3UAAAAAAAAAAAAAAAAACURvd25sb2FkcwAAAgAsLzpVc2VyczpldWNsaWQ6RG93bmxvYWRzOndzcGNfamt0cjEwXzM0NS5iaWIADgAoABMAdwBzAHAAYwBfAGoAawB0AHIAMQAwAF8AMwA0ADUALgBiAGkAYgAPABoADABNAGEAYwBpAG4AdABvAHMAaAAgAEgARAASACpVc2Vycy9ldWNsaWQvRG93bmxvYWRzL3dzcGNfamt0cjEwXzM0NS5iaWIAEwABLwAAFQACAA3//wAAAAgADQAaACQASgAAAAAAAAIBAAAAAAAAAAUAAAAAAAAAAAAAAAAAAAGq}}

@article{kodokostas2019rail,
	author = {Kodokostas, D. and Lambropoulou, S.},
	date-added = {2022-06-07 21:53:51 -0700},
	date-modified = {2022-06-20 23:14:28 -0700},
	journal = {Journal of Knot Theory and Its Ramifications},
	number = {13},
	title = {Rail knotoids},
	volume = {28},
	year = {2019}}

@article{gugumcu2017knotoids,
	author = {G{\"u}g{\"u}mc{\"u}, N. and Lambropoulou, S.},
	date-added = {2022-06-07 21:53:40 -0700},
	date-modified = {2022-06-20 23:09:16 -0700},
	journal = {Symmetry},
	number = {12},
	publisher = {Multidisciplinary Digital Publishing Institute},
	title = {Knotoids, braidoids and applications},
	volume = {9},
	year = {2017}}

@article{dorier2018knoto,
	author = {Dorier, J. and Goundaroulis, D. and Benedetti, F. and Stasiak, A.},
	date-added = {2022-06-07 21:52:34 -0700},
	date-modified = {2022-06-20 23:04:18 -0700},
	journal = {Bioinformatics},
	number = {19},
	pages = {3402-3404},
	publisher = {Oxford University Press},
	title = {Knoto-ID: a tool to study the entanglement of open protein chains using the concept of knotoids},
	volume = {34},
	year = {2018}}

@article{goundaroulis2017studies,
	author = {Goundaroulis, D. and Dorier, J. and Benedetti, F. and Stasiak, A.},
	date-added = {2022-06-07 21:52:16 -0700},
	date-modified = {2022-06-20 23:06:07 -0700},
	journal = {Scientific reports},
	number = {1},
	pages = {1-9},
	publisher = {Nature Publishing Group},
	title = {Studies of global and local entanglements of individual protein chains using the concept of knotoids},
	volume = {7},
	year = {2017}}

@article{turaev2012knotoids,
	author = {Turaev, V.},
	date-added = {2022-06-07 21:51:50 -0700},
	date-modified = {2022-06-20 23:19:11 -0700},
	journal = {Osaka Journal of Mathematics},
	number = {1},
	pages = {195-223},
	title = {Knotoids},
	volume = {49},
	year = {2012}}

@article{gugumcu2017new,
	author = {G{\"u}g{\"u}mc{\"u}, N. and Kauffman, L.H.},
	date-added = {2022-06-07 21:51:34 -0700},
	date-modified = {2022-06-20 23:07:57 -0700},
	journal = {European Journal of Combinatorics},
	pages = {186-229},
	publisher = {Elsevier},
	title = {New invariants of knotoids},
	volume = {65},
	year = {2017}}

\end{document}